\DeclareSymbolFont{cyrletters}{OT2}{wncyr}{m}{n}
\DeclareMathSymbol{\Sha}{\mathalpha}{cyrletters}{"58}
\newcommand{\bC}{{\mathbb{C}}}
\newcommand{\bN}{{\mathbb{N}}}
\newcommand{\bP}{{\mathbb{P}}}
\newcommand{\bQ}{{\mathbb{Q}}}
\newcommand{\bR}{{\mathbb{R}}}
\newcommand{\bZ}{{\mathbb{Z}}}
\newcommand{\Bx}{{\mathbf{x}}}
\newcommand{\By}{{\mathbf{y}}}
  \newcommand{\E}{{\mathcal{E}}}
\renewcommand{\L}{{\mathcal{L}}}
  \newcommand{\N}{{\mathcal{N}}}
\renewcommand{\O}{{\mathcal{O}}}
\renewcommand{\S}{{\mathcal{S}}}
  \newcommand{\X}{{\mathcal{X}}}
\newcommand{\fp}{\mathfrak{p}}
\newcommand{\fX}{\mathfrak{X}}
\newcommand{\BA}{\mathbf{A}}
\newcommand{\ord}{\operatorname{ord}}
\newcommand{\sqf}{\operatorname{sqf}}
\newcommand{\ep}{\varepsilon}
\newcommand{\upchi}{{\raise.35ex\hbox{$\chi$}}}
\newtheorem{theorem}{Theorem}[section]
\newtheorem{corollary}[theorem]{Corollary}
\newtheorem{proposition}[theorem]{Proposition}
\newtheorem{lemma}[theorem]{Lemma}
\theoremstyle{definition}
\newtheorem{definition}[theorem]{Definition}
\newtheorem{question}[theorem]{Question}
\numberwithin{equation}{section}
\begin{document}

\title{The density of rational points on $\bP^1$ with three stacky points}

\author{Brett Nasserden}
\address{Department of Pure Mathematics \\
University of Waterloo }
\email{bnasser@uwaterloo.ca}
\indent

\author{Stanley Yao Xiao}
\address{Department of Mathematics \\
University of Toronto \\
Bahen Centre \\
40 St. George Street, Room 6290 \\
Toronto, Ontario, Canada \\  M5S 2E4 }
\email{syxiao@math.toronto.edu}
\indent

%%%%%%%%%%%%%%%%%%%%%%%%%%%%%%%%%%%%%%%%%%%%%%%%%%%%%%%%%%%%%%%%%%%

\begin{abstract} In this paper we consider the density of rational points on the ``stacky" curve $\fX(\bP^1;0,2;1,2;\infty,2)$ which is $\bP^1$ with three half points, with respect to the so-called Ellenberg-Satriano-Zuerick-Brown height. In particular, we prove a conjecture of Ellenberg. 
\end{abstract}

\maketitle

\section{Introduction} 

Two of the outstanding conjectures in number theory are the so-called Manin-Batyrev conjecture \cite{BatmanConj} for the density of rational points on Fano varieties, and Malle's conjecture \cite{MalleConj} on the number of number fields of bounded discriminant having fixed degree and Galois group. Both conjectures assert, roughly, that the objects to be counted satisfy an asymptotic formula of the form 
\[C \cdot X^\alpha (\log X)^\beta, \]
where $C, \alpha, \beta$ are non-negative numbers with $C, \alpha > 0$ which can be computed explicitly within their respective conjectural frameworks. \\

In a forthcoming article J.~Ellenberg, M.~Satriano, and D.~Zuerick-Brown formulate a bold conjecture that has both the Manin and Malle conjetures as special cases  \cite[Main Conjecture]{ESD-B}. Their conjecture concerns counting rational points with respect to a new theory of heights applying broadly to \emph{algebraic stacks}. While the Manin and Malle conjectures are well studied, very little is known about the general conjecture. \\

In the framework of \cite{ESD-B} the Malle and Manin conjectures represent two extremes of their theory of heights. The Manin conjecture involves counting points on a projective variety with respect to a Weil height and no theory of algebraic stacks is required. On the other hand from the point of view of \cite{ESD-B} the Malle conjecture involves counting rational points on the classifying stack $BG$ where $G$ is a finite group. The theory of algebraic stacks is essential for this interpretation of the Malle conjecture, and the standard theory of heights on projective algebraic varieties is insufficient in this case.\\

In this article we consider instances of the main conjecture in \cite{ESD-B} that lie between the two extremes described above. In other words, we consider cases that involve mixing the stacky and non-stacky phenomena. We formulate a new theory of heights on a stacky analogue of smooth projective algebraic curves and show that this stacky height is enough to determine the integral points on these "stacky curves". We then consider a particular stacky curve suggested by J.~Ellenberg\footnote[1]{"What?s up in arithmetic statistics?" Number Theory Web Seminar, July 23, 2020} and show that our theory of heights matches \cite{ESD-B} in this instance. Finally, we verify a specific instance of the main conjecutre in \cite{ESD-B} given by Ellenberg\footnotemark[1]  using analytical methods.\\ 

Our point of view with algebraic stacks is to adopt a bottom up perspective. In other words, to define our algebraic stacks in terms of a base variety along with some extra data which is enough to construct a unique algebraic stack. As we are interested in a well behaved family of stacky curves this description will be particularly simple. The reason for this choice is that although the heights we are concerned with are motivated by stacky phenomena, the analysis of the heights we are interested in lie firmly within the purview of analytic number theory and so a relatively simple exposition is preferable. The bottom up point of view allows us to discuss the objects we are interested in a concrete way that avoids technicalities and emphasizes the data most important for our purposes. The interested reader may consult \cite{BottomUp} for general results involving the bottom up perspective on algebraic stacks and \cite[Lemma 5.3.10]{CanRing} for the case of stacky curves.\\

We now describe the stacky curve we are most interested in. Those unfamiliar with the theory of algebraic stacks may note the explicit form of our height given in (\ref{eq:HeightDef}) and move on to the definition of an $M$-curve given by Definition \ref{def:MCurve}. Let $\fX(\mathbb{P}^1_\bQ;0,2;1,2;\infty,2)$ be the algebraic stack obtained by replacing $\{0,1,\infty\}\subseteq {\bP^1}$ with $B(\bZ/2 \bZ)$. For a precise definition of this object, see \cite{BottomUp}. The points $0,1,\infty$ are "stacky points" which have an attached stabilizer group $\mathbb{Z}/2\mathbb{Z}$ and account for $\fX(\mathbb{P}^1_\bQ;0,2;1,2;\infty,2)$ not being a projective variety. The other points behave like the points on the quasi-projective variety $\bP^1-\{0,1,\infty\}$.  This construction is an example of a "bottom-up" description of an algebraic stack; the algebraic stack $\fX(\mathbb{P}^1_\bQ;0,2;1,2;\infty,2)$ is constructed from the data of $\mathbb{P}^1_\bQ$ and the points with their associated multiplicities. In fact a large class of algebraic stacks can be constructed in this way, see \cite{BottomUp} for the details and for further references and recent appearances of these objects see \cite{CanRing}, \cite{LGStackyCurve}, and \cite{StackyCurves}. \\ 

One of the novelties of \cite{ESD-B} is that \emph{vector bundles} have an associated height, while in the classical setting a height is only associated to a line bundle.  Therefore to apply the theory of \cite{ESD-B} one must choose a vector bundle on $\fX$. We choose the tangent line bundle $\mathbb{T}_{\fX}$.  There is a birational mapping $\pi\colon \fX\rightarrow \bP^1$ called the \emph{coarse space} map. The height associated to the tangent bundle $\mathbb{T}_\fX$ described by \cite{ESD-B} can be described in terms of the coarse space map as follows. Given a point $P\in \fX(\mathbb{Q})$ we have the associated point $\pi(P)=(a_P:b_P)\in \bP^1$. Now consider the height $H(a,b)$ on primitive integer pairs $(a,b)$ by
\begin{equation}\label{eq:HeightDef}
    H(a,b) = \sqf(a) \sqf(b) \sqf(a+b) \max\{|a|,|b|\},
\end{equation}
with $\sqf(n) = n/k^2$, where $k^2$ is the largest square dividing $n$. Then given $P\in \fX(\mathbb{Q})$ we define $H_{\mathbb{T}_{\fX}}(P)=H(\pi(P))$ by choosing an integral representation for $\pi(P)$.\\ 

Notice that the height (\ref{eq:HeightDef}) can be defined in terms of the coarse moduli space $\bP^1$ of $\fX(\mathbb{P}^1_\bQ;0,2;1,2;\infty,2)$ and a formula that takes into account the multiplicities in a simple way. Motivated by this we now eschew the theory of algebraic stacks in favor of Darmon's $M$-curves, which is an essentially equivalent theory that emphasizes the bottom up perspective to algebraic stacks. In other words, we keep track of the minimum amount of data that can be used to construct the algebraic stack. One may think of this as being analogous to only keeping track of a particular Weierstrass equation of an elliptic curve. 

\begin{definition}[\cite{MCurves}]\label{def:MCurve} Let $K$ be a number field.
 An \textbf{$M$-curve over $K$} consists of the following data: 
 
 \begin{itemize}
     \item A smooth projective curve $X$ defined over a number field $K$, and 
     \item For each $P\in X(K)$ a multiplicity $m_P\in \mathbb{Z}_{\geq 1}\cup\{\infty\}$ with $m_P=1$ for all but finitely many $P$.
 \end{itemize} 
 
 We use the notation.
	\[\X=(X;P_1,m_1;P_2,m_2;...;P_r,m_r)\] to denote the $M$-curve with multiplicities $m_{P_i}=m_i$ and if $Q\notin\{P_1,...,P_r\}$ then $m_Q=1$. 
\end{definition}

The $M$-curve we are most interested is given by $\bP^1_{2,2,2}=\X(\bP^1_\bQ;0,2;1,2;\infty,2)$ with the height on $\bP^1$ given by (\ref{eq:HeightDef}). Our main goal is to count rational points on $\bP^1$ with respect to the height (\ref{eq:HeightDef}). On writing
\[a = x_1 y_1^2, b = x_2 y_2^2, x_1, x_2 \text{ square-free}\]
We then have
\[H(a,b) = x_1 x_2 \sqf(x_1 y_1^2 + x_2 y_2^2) \max\{|x_1 y_1^2|,  |x_2 y_2^2|\}.\]
and the max on the right hand side is dependent only on the relative size of $|a|,|b|$. If we write
\begin{equation} \label{variety} x_1 y_1^2 + x_2 y_2^2 = x_3 y_3^2,\end{equation} 
then we further obtain the expression
\[H(a,b) = \max\{|x_2 x_3 (x_1 y_1)^2|, |x_1 x_3 (x_2 y_2)^2|\}.\]
We may assume without loss of generality that $|x_1 y_1^2| \geq |x_2 y_2|^2$ and $x_1 > 0$, so that 
\[H(a,b) = |x_2 x_3 (x_1 y_1)^2|.\]

We put
\begin{equation} \label{NT} 
N(T) = \# \{\Bx, \By \in \bZ_{\ne 0}^3 : \gcd(x_i, x_j), \gcd(y_i, y_j) = 1 \text{ for } i \ne j, x_i \text{ square-free for } i = 1,2,3,
\end{equation}
\[x_1 y_1^2 + x_2 y_2^2 = x_3 y_3^2, x_1 > 0, x_1 y_1^2 \geq |x_2 y_2^2|, |(x_1 y_1)^2 x_2 x_3| \leq T\}\]

Our first main result will be the following:

\begin{theorem} \label{MT} There exist positive numbers $c_1, c_2, c_3$ such that 
\[c_1 T^{1/2} (\log T)^3 < N(T) < c_2 T^{1/2} (\log T)^3 \]
for all $T > c_3$. 
\end{theorem}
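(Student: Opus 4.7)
The natural strategy is to stratify $N(T)$ by the squarefree parts $\Bx = (x_1, x_2, x_3)$, writing $N(T) = \sum_{\Bx} N_{\Bx}(T)$, where $N_{\Bx}(T)$ counts integer solutions $(y_1, y_2, y_3)$ to the conic $C_{\Bx}: x_1 y_1^2 + x_2 y_2^2 = x_3 y_3^2$ subject to pairwise coprimality of the $y_i$ and the bound $|y_1| \le Y_1(\Bx) := T^{1/2}/(x_1 \sqrt{x_2 x_3})$. The outer sum runs over squarefree pairwise-coprime $\Bx$ with $x_1 > 0$ and $x_1^2 x_2 x_3 \le T$, so the problem splits into (a) estimating $N_{\Bx}(T)$ for each $\Bx$, and (b) summing over $\Bx$.

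For (a), solvability of $C_{\Bx}$ over $\bQ$ is governed by Hilbert-symbol conditions via Hasse--Minkowski. When $C_{\Bx}$ is solvable, Holzer's theorem furnishes a base point of height $O(\sqrt{x_1 x_2 x_3})$, and parametrizing rational solutions by lines through this base point produces $(y_1, y_2, y_3) = (Q_1(s, t), Q_2(s, t), Q_3(s, t))$ for integer quadratic forms $Q_i$. The bound $|y_1| \le Y_1$ then translates into a convex region in $(s, t)$, and a Gauss-type lattice-point estimate (with M\"obius inversion to enforce the coprimality of the $y_i$) gives $N_{\Bx}(T) \asymp \rho(\Bx) \cdot Y_1(\Bx)/\sqrt{x_1 x_2 x_3}$ for a bounded multiplicative local density $\rho(\Bx)$.

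For (b), we then obtain
\[
N(T) \asymp T^{1/2} \sum_{\Bx} \frac{\rho(\Bx)}{x_1 \sqrt{x_2 x_3}\cdot \sqrt{x_1 x_2 x_3}}.
\]
The Dirichlet series of the summand, viewed in the variable $n_{\Bx} = x_1^2 x_2 x_3$, has an Euler product whose local factor at each prime $p$ splits according to whether $p$ divides $x_1$, $x_2$, $x_3$, or none; careful analysis of these factors yields a pole of order $4$ at $s = 1/2$, and the Selberg--Delange theorem (or a direct Perron analysis) gives $N(T) \asymp T^{1/2}(\log T)^3$. The principal obstacle is step (a): obtaining the conic count uniformly in $\Bx$, including uniform solvability, Holzer-small base points, and error control in the lattice-point estimate. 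A secondary difficulty is verifying that $\rho(\Bx)$ contributes the right local factors so that the Dirichlet series has the expected fourth-order pole rather than a lower one, where the three stacky points $\{0,1,\infty\}$ enter as three independent $\zeta$-factors in the Euler product (the fourth emerging from the combined singular behavior). For the lower bound, one must additionally show that $C_{\Bx}$ is solvable for a positive proportion of $\Bx$, which follows from Hasse--Minkowski plus a density argument in the spirit of Friedlander--Iwaniec.
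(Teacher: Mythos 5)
Your plan of fibering over the squarefree vector $\Bx$ and counting lattice points on the conic $C_\Bx$ is indeed the natural starting point, and the Dirichlet-series heuristic you give for the fourth log-power is roughly the right shape of the answer. But there is a genuine gap in step (a): the two-sided claim $N_{\Bx}(T) \asymp \rho(\Bx)\,Y_1(\Bx)/\sqrt{x_1x_2x_3}$ cannot hold uniformly, because for the bulk of the admissible $\Bx$ (those with $|x_1x_2x_3| \gg T^{1/2}$, which is most of the region $x_1^2x_2x_3 \le T$) the main term is $\ll 1$, and what actually controls $N_\Bx(T)$ there is the $O(d(x_1x_2x_3))$ "sporadic-solution" error inherent in any lattice-point estimate on a conic (e.g.\ Browning--Heath-Brown's Corollary 2). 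The number of $\Bx$ with $x_1^2 x_2 x_3 \le T$ and $|x_1x_2x_3| \gg T^{1/2}$ is $\gg T\log T$, and even after imposing the Legendre solubility conditions (which save roughly three log factors) this is nowhere near $T^{1/2}(\log T)^3$. So summing the error terms over $\Bx$ blows up, and no amount of care in Holzer base points or M\"obius inversion repairs this; the issue is that a soluble conic with a tiny box is a genuinely different counting problem than a soluble conic with a large box.

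The paper handles exactly this obstruction by \emph{switching fibrations}: when $|x_1x_2x_3| \ge T^{1/2}$ one shows via the height bound that any contributing solution satisfies $|y_1y_2y_3| \le T^{1/2}$, and one then fixes $\By$ instead of $\Bx$ and treats the equation as a \emph{linear} equation $y_1^2x_1 + y_2^2x_2 = y_3^2x_3$, where a lattice-point lemma in lopsided boxes gives $O(T^{1/2}/|y_1y_2y_3| + 1)$ per $\By$, and the $+1$ terms are now harmless because there are only $O(T^{1/2}(\log T)^2)$ admissible $\By$. Thus the argument is really two dual counts patched at $|x_1x_2x_3| \approx T^{1/2}$; the paper explicitly flags the incompatibility of these two regimes as the reason only an order-of-magnitude result, not an asymptotic, is obtained. (Your $\Bx$-fibration, done in the small-$\Bx$ regime, is close to the paper's "quadratic case", and there the key input is the $O((\log X)^3)$ bound for $\sum d(x_1x_2x_3)/(x_1x_2x_3)$ over everywhere-locally-soluble $\Bx$ — the paper's Proposition~\ref{quad form prop}, proved via Legendre's criterion and Fouvry--Kl\"uners-style character-sum estimates — which plays the role of your "fourth-order pole".) Your lower-bound sketch also differs from the paper's, which fixes $\By$ with $|y_1y_2y_3| \le T^\delta$ and runs a squarefree sieve over $(x_1,x_2)$ in a box; a density-of-soluble-conics argument alone does not by itself produce $\gg T^{1/2}(\log T)^3$ solutions without also showing the conic counts are large for enough $\Bx$, which again runs into the uniformity problem above.
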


In particular, we confirm Ellenberg's conjecture\footnotemark[1] that $N(T) = O_\ep \left(T^{1/2 + \ep}\right)$. Indeed, our theorem gives an exact order of magnitude for $N(T)$. \\

We also propose a general height applicable to $M$-curves with finite multiplicities (in particular to stacky curves in the sense of \cite[Definition 5.2.1]{CanRing}) which matches (\ref{eq:HeightDef}) for our specific curve. Moreover, our height has the remarkable feature that it naturally detects the difference between \emph{integral} and \emph{rational} points on $M$-curves (here we are using Darmon's notion of integrality; see Definition \ref{DarInt})

\begin{theorem} \label{height thm} For each $M$-curve $\X/\bQ$ with finite multiplicities there exists a height $H = H_\L H_\X$ with the property that an element of $x \in \X(K)$ is integral if and only if $H_\X(x) = 1$. Moreover, for $\X = \X(\bP^1;0,2;-1,2;\infty,2)$ the height $H_\L H_{\X}$ is equivalent to the height given by (\ref{eq:HeightDef}).
\end{theorem}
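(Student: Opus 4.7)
The plan is to build $H_\X$ from local ``$m_i$-free'' contributions at each stacky point, together with a classical Weil height $H_\L$, and to verify both claims essentially by unwinding the definitions. For an $M$-curve $\X=(X;P_1,m_1;\dots;P_r,m_r)$ with all $m_i<\infty$, I would fix a proper regular integral model of $X$ over $\bZ$ and write $\overline{P_i}$ for the horizontal divisor cut out by $P_i$. For $x\in X(\bQ)\setminus\{P_1,\dots,P_r\}$ the closure $\overline{\{x\}}$ meets $\overline{P_i}$ in a finite zero-dimensional subscheme, and for each prime $p$ let $n_{i,p}(x)$ denote the length of this intersection above $p$. I would then define
\[H_{\X,i}(x)=\prod_p p^{\,n_{i,p}(x)\bmod m_i},\qquad H_\X(x)=\prod_{i=1}^r H_{\X,i}(x),\]
and take $H_\L$ to be the Weil height attached to an ample line bundle $\L$ on $X$; set $H=H_\L H_\X$. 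The integrality claim is then essentially tautological: Darmon's definition of an integral point on $\X$ (Definition~\ref{DarInt}) requires $m_i\mid n_{i,p}(x)$ for every $i$ and $p$, which is exactly the condition that each local exponent $n_{i,p}(x)\bmod m_i$ vanishes, i.e.\ $H_\X(x)=1$.

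To identify $H$ with (\ref{eq:HeightDef}) for the specific curve $\X(\bP^1;0,2;-1,2;\infty,2)$, I would take $\L=\O_{\bP^1}(1)$ so that $H_\L((a:b))=\max\{|a|,|b|\}$ on a primitive integer representative $(a,b)$. The horizontal divisors $\overline{0}$, $\overline{-1}$, $\overline{\infty}$ on $\bP^1_\bZ$ are cut out by $X=0$, $X+Y=0$, $Y=0$, and the closure of such a primitive $(a:b)$ is the section of $\bP^1_\bZ\to\mathrm{Spec}\,\bZ$ it determines, cut out by $bX-aY=0$. A direct intersection computation (for instance, combining $X=0$ with $bX-aY=0$ forces $aY=0$, yielding length $v_p(a)$ at the prime $p$) shows
\[n_{0,p}((a:b))=v_p(a),\quad n_{-1,p}((a:b))=v_p(a+b),\quad n_{\infty,p}((a:b))=v_p(b),\]
so $H_{\X,0}=\sqf(a)$, $H_{\X,-1}=\sqf(a+b)$, $H_{\X,\infty}=\sqf(b)$, and hence $H_\L H_\X((a:b))=\sqf(a)\sqf(b)\sqf(a+b)\max\{|a|,|b|\}$, which matches (\ref{eq:HeightDef}) exactly.

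The main obstacle is the well-definedness of $H_\X$ in the first step: showing that the local lengths $n_{i,p}$, and hence $H_\X$ itself, are independent of the choice of proper regular integral model, and that the product over primes has finite support. The finite support is immediate because $x$ specializes to $P_i$ at only finitely many primes (those at which the two horizontal divisors meet). The model-independence is a standard intersection-theoretic check via the projection formula applied to any blow-up comparing two regular integral models. With these routine but somewhat technical verifications in hand, the general integrality criterion and the $\bP^1$ matching both follow immediately from the construction, as sketched above.
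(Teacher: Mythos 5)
Your construction coincides with the paper's: your local exponent $n_{i,p}(x)$ is precisely Darmon's intersection multiplicity $(x\cdot P_i)_p$, your $\prod_p p^{n_{i,p}(x)\bmod m_i}$ is exactly the "$m_{P_i}$-free part" of the paper's $\lambda(P_i,x)$, and the tautological argument you give for the iff (each $H_{\X,i}(x)\geq 1$, so the product is $1$ iff every exponent $n_{i,p}\bmod m_i$ vanishes, which by unique factorization over $\bZ$ is exactly Darmon's integrality condition) is the content of the paper's Proposition~\ref{prop:cuttingout} together with the $K=\bQ$ direction of Theorem~\ref{height MT}. The explicit $\bP^1$ computation is likewise the same.

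The one place you overreach is the final paragraph, where you identify model-independence of the $n_{i,p}$ as "the main obstacle" and assert it follows from the projection formula across a blow-up. That claim is false as stated: intersection multiplicities of horizontal sections on an arithmetic surface are \emph{not} invariant under passing to a different regular model. For example on $\bP^1_{\bZ_p}$ the sections $(1:0)$ and $(1:p)$ meet with multiplicity $1$ in the fiber over $p$, but after blowing up that closed point (which yields another proper regular model) their strict transforms are disjoint. Fortunately this is a non-issue for the theorem: the paper explicitly declares the height to be relative to a chosen smooth proper model $\underline{X}$ over $\O_{K,S}$, and Darmon's notion of $(\underline{X},S)$-integral point is defined relative to the \emph{same} choice, so both sides of the equivalence carry the same dependency and nothing needs to be shown model-independent. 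You should simply fix a smooth proper model once and for all (for $\bP^1$ over $\bZ$ it is in any case unique) rather than trying to prove a statement that is both unnecessary and, for general regular models, wrong.
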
 

Theorem \ref{height thm} is a consequence of the more technical Theorem \ref{height MT}. \\

We illustrate how Theorem \ref{height thm} allows one to detect integral points on $M$-curves. In this case the standard height is given by $H_s(a,b) = \max\{|a|,|b|\}$ and the stacky height given by (\ref{eq:HeightDef}). They are equal precisely when
\[|\sqf(a) \sqf(b) \sqf(a+b)| = 1, \]
or in the notation of (\ref{variety}), that $|x_1| = |x_2| = |x_3| = 1$. (\ref{variety}) then turns into 
\[\pm y_1^2 \pm y_2^2 = \pm y_3^2, \]
and up to rearranging we are essentially counting points on the conic 
\begin{equation} \label{intcon} y_1^2 + y_2^2 = y_3^2.\end{equation}
Therefore if we denote by $\N(T)$ the number of integral points (in the sense of Definition \ref{DarInt}) on $\bP_{2,2,2}^1$ then:

\begin{corollary} \label{intct}  There exist positive numbers $c_1, c_2, c_3$ such that for all $T > c_3$ we have \[c_1 T^{1/2} < \N(T) <c_2 T^{1/2}.\]
\end{corollary}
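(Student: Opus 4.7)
The plan is to reduce Corollary~\ref{intct} to the classical count of primitive Pythagorean triples. By Theorem~\ref{height thm} a rational point $P \in \bP^1_{2,2,2}(\bQ)$ is integral exactly when $H_\X(P) = 1$. In the factorization $a = x_1 y_1^2$, $b = x_2 y_2^2$, $a + b = x_3 y_3^2$ used in the discussion after (\ref{variety}), the condition $H_\X = 1$ is equivalent to $|x_1| = |x_2| = |x_3| = 1$. In that case the total height collapses to $H(a,b) = H_\L(a,b) = \max\{|a|,|b|\} = \max\{y_1^2, y_2^2\}$, so $H(a,b) \leq T$ becomes $\max\{|y_1|,|y_2|\} \leq T^{1/2}$.

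Next I will enumerate the sign configurations. A global sign flip of $(a, b)$ does not change the underlying rational point, so I may assume $x_1 = 1$. Of the four choices $(x_2, x_3) \in \{\pm 1\}^2$, the case $(+, -)$ gives the vacuous equation $y_1^2 + y_2^2 = -y_3^2$; the remaining three cases, after relabelling the indices, all reduce to the single equation
\[u^2 + v^2 = w^2\]
together with the pairwise coprimality $\gcd(u,v) = \gcd(v,w) = \gcd(u,w) = 1$ and the non-vanishing of $u, v, w$. These are precisely the primitive Pythagorean triples. In the case $(+, +)$ the legs satisfy $|y_1|, |y_2| \leq T^{1/2}$ and so the hypotenuse $|y_3| \leq \sqrt{2}\, T^{1/2}$; in the cases $(-, +)$ and $(-, -)$ the hypotenuse (namely $|y_1|$ or $|y_2|$) and one leg are both bounded by $T^{1/2}$, which forces the remaining leg to be bounded by $T^{1/2}$ as well.

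Finally I invoke the classical estimate: via Euclid's parameterization $(m^2 - n^2, 2mn, m^2 + n^2)$ with $\gcd(m,n) = 1$ and $m \not\equiv n \pmod 2$, the number of primitive Pythagorean triples with hypotenuse at most $Y$ is asymptotically $Y/(2\pi)$, and in particular is of exact order $Y$. Summing the contributions of the three non-vacuous sign cases, each of order $T^{1/2}$, gives $\N(T) = \Theta(T^{1/2})$. The only point that requires some care is the sign/coprimality bookkeeping and the identification of $\bP^1$-points with primitive integer pairs modulo an overall sign; the underlying analytic estimate is entirely classical, so no real obstacle arises.
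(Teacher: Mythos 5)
Your proof is correct and follows essentially the same route as the paper: both identify integrality with $|x_1|=|x_2|=|x_3|=1$, observe that (\ref{variety}) then collapses (up to sign rearrangement) to the Pythagorean conic $y_1^2+y_2^2=y_3^2$ with the height condition $\max\{|y_1|,|y_2|\}\le T^{1/2}$, and invoke Euclid's parametrization to get a count of order $T^{1/2}$. The only difference is that you spell out the sign bookkeeping more explicitly, whereas the paper compresses it to ``up to rearranging.''
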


The proof is elementary, since the curve can be explicitly parametrized by 
\[y_1 = u^2 - v^2, y_2 = 2uv, y_3 = u^2 + v^2.\]
The condition $\max\{|y_1|, |y_2\} \leq T^{1/2}$ is subsumed by $u^2 + v^2 \leq 4T^{1/2}$ say, so number of possible $u,v$'s is $\asymp T^{1/2}$ as desired. \\

Theorem \ref{MT} and Corollary \ref{intct} imply that asymptotically $0$-percent of the rational points on $\bP_{2,2,2}^1(\bQ)$ are integral, in the sense of Darmon (Definition \ref{DarInt}). 

\subsection{Organization of the paper} The proofs of Theorems \ref{MT} and \ref{height thm} (Theorem \ref{height MT}) are essentially disjoint, and are contained in Sections \ref{sec:MCurves} and \ref{count sec} respectively. The reader interested in one but not the other can essentially read these sections independently of each other. \\

We note that in the proof of Theorem \ref{MT} we shall require a counting result on the number of diagonal ternary quadratic forms having bounded, square-free discriminant given as Proposition \ref{quad form prop}. This may be of independent interest. We further note that it is possible to refine the arguments given in Section \ref{count sec} to give an asymptotic formula in Proposition \ref{quad form prop}, but we do not pursue this in the present paper. 

\subsection*{Notation} We denote by $d_k(n)$ for the number of ways of writing $n$ as a product of $k$ (not necessarily distinct) positive integers, and write $d(n) = d_2(n)$ for the usual divisor function. We will also use the big-$O$ notation as well as Landau's notation. In particular, we will denote in the subscripts any dependencies; if there are no subscripts, then the implied constants are absolute. 

\section{Heights on $M$-curves}\label{sec:MCurves}

We now define our height function on a $M$-curve $\X$. Before continuing let us fix some notation. We assume that $\X=(X;P_1,m_1;...;P_r,m_r)$ is an $M$-curve as in Definition \ref{def:MCurve}. We furthermore assume that $1<m_{P_i}<\infty$.

\subsection{Translation between $M$-curves and stacky curves} Before moving on we explain the connection between stacky curves and $M$-curves. Those uninterested may skip this section and safely work with $M$-curves. Given a nice stacky curve $\fX$ over a number field $K$ there is a morphism $\pi\colon \fX\rightarrow X$ to a curve $X$ called the coarse space morphism, which is the universal morphism from $\fX$ to a scheme. In practice one often constructs $\fX$ from $X$ by specifying a collection of points $P_1,...,P_r$ in the coarse space $X$ and attaching stabilizer groups $\mu_{P_i}$ to each $P_i$. This is the bottom up approach of constructing an algebraic stack. One can think of this as specifying the ramification data of the coarse space morphism $\pi\colon \fX\rightarrow X$. We often think of these points as "fractional points" because in divisor class group of the associated curve we have added the point $\frac{1}{\#\mu_{P_i}}P_i$. In other words, we think of a stacky curve as a smooth curve $X$ with a choice of points $P_1,...,P_r$ with stabilizer groups $\mu_{P_i}$ attached to each $P_i$.  This data defines an $M$-curve $\X=(X;P_1,\#\mu_{P_1};...;P_r,\#\mu_{P_r})$. Conversely, given an $M$-curve $\X=(X;P_1,m_1;...;P_r,m_r)$ with each $1<m_{P_i}<\infty$ we consider the stacky curve given with points $P_i$ having the stabilizer group $\mu_{m_{P_i}}$. In this way one may establish a bijection between smooth proper geometrically connected Delign-Mumford stacks of dimension 1 over $K$ with stacky points defined over $K$ that contain an open dense subscheme and possess a projective coarse moduli space and $M$-curves over $K$ with finite multiplicities.

\subsection{Construction of heights}\label{subsec:HeightsMCurves}

Notice that the height (\ref{eq:HeightDef}) is defined in terms of a function on $\bP^1$. Since a stacky curve and its coarse space agree up to a finite set of points and we are interested in counting points asymptotically it suffices to count points on the coarse space with the height (\ref{eq:HeightDef}). With this inspiration our goal is to take a stacky curve $\fX$ with coarse space $X$ and construct the associated $M$-curve $\X=(X;P_1,m_1;...;P_r,m_r)$. We then define a height on the rational points of $X$ using the $M$-curve data with the purpose of counting points with respect to this height. \\

Choose a finite set of primes $S$ of $\O_K$ containing all the primes of bad reduction along with a smooth and proper model $\underline{X}$ of $X$ over $\O_{K,S}$.  \emph{Everything we do is relative to this choice of model}, similar to how everything we do is relative choosing the finite set of primes $S$. We will use the following notion of \emph{intersection multiplicity} from \cite{MCurves} to define integral points on $M$-curves and to define our heights. 

\begin{definition}[\cite{MCurves}] 
Let $P,Q$ be distinct points in $X(K)$ and place $\nu$ a place in $K$ with $\nu\notin S$. Take $\fp_\nu \subset \O_K$ to be the prime ideal associated to $\nu$. We define the \textbf{intersection multiplicity} of $P$ and $Q$ at $\nu$ as follows.
	
	\[(P\cdot Q)_\nu:=\max\{m:\textnormal{ the images of }P,Q
	\textnormal{ in }\underline{X}(\O_{K,S}/\fp_\nu^m)\textnormal{ are equal.}\}\]
where the maximum over the empty set is defined to be 0 above.	 
\end{definition}

We wish to define a height function on the $M$-curve which takes into account the local multiplicities.  Motivated by the work of \cite{ESD-B} our strategy will be to define a height function that takes into account a global classical height on the base curve $X$ and a local part that depends on the intersection multiplicities. In the case relevant to us we recover (\ref{eq:HeightDef}). \\

Let us fix an $M$-curve $\X=(X;P_1,m_1;...;P_r;m_r)$. To avoid complications with the infinite places we shall assume that $S$ contains all infinite places of $K$. Our height function will have a classical part which only depends on the underlying curve $X$ and a stacky part that depends on the points and multiplicities. To define the classical part of our height we choose an ample line bundle $\L$ on $X$ and a multiplicative ample height $H_\L$. Our strategy for the stacky contributions is to take into account all of the primes that are not in $S$. Since our intersection multiplicities depend on the choice of $S$ and model $\underline{X}$, so will our heights. Let $P$ be a point of $X$. We define 
\begin{equation}\label{eq:LocalLambdaFactor}
\lambda_{S,\underline{X},\nu}(P,t)=\lambda_{\nu}(P,t)=\textnormal{N}(\fp_\nu)^{(t\cdot P)_\nu}    
\end{equation} for $\nu\notin S$ and set
\begin{equation}\label{eq:LambdaFactor}
\lambda(P,t)=\prod_{\nu\notin S}\lambda_\nu(P,t).
\end{equation}

 To take into account the multiplicities of points we only consider $\lambda(P,t)$ up to $m_P$-powers. That is we look at the image  $\overline{\lambda(P,t)}\in \bQ/\bQ^{m_P}$ and consider this to be the local contribution to the height. Precisely we define

\begin{equation}\label{eq:PMHeight}
H_{S,\underline{X}}(P,t)=H(P,t)=m_P\textnormal{-free part}(\lambda(P,t))    
\end{equation}

and $H(P,t)=1$ if $m_P=1$.\\

We now multiply the classical and stacky contributions to obtain our height function. To sum up, given an $M$-curve $\X=(X;P_1,m_1;...;P_r,m_r)$ and a choice of an ample height $H_\L$ on $X$, a choice of primes $S$ and a model $\underline{X}$ we define

\begin{equation}\label{eq:MHeight}
H_{\X,S,\underline{X},\L}(t)=H_\L(t)\prod_{i=1}^rH_{S,\underline{X},}(P_i,t)=H_\L(t)\prod_{i=1}^rm_{P_i}\textnormal{-free part}(\lambda(P_i,t))
\end{equation}
We call $H_\L(t)$ the \emph{classical} part and $\prod_{i=1}^rH_{S,\underline{X},}(P_i,t)$ the \emph{stacky part} of the height $H_{\X,S,\underline{X},\L}(t)$.\\

What is interesting is that our height, given by (\ref{eq:PMHeight}) below, allows us to differentiate between \emph{rational} and \emph{integral} points on $M$-curves; this is one of the main features of our height and thus provide additional support that Ellenberg, Satriano, and Zuerick-Brown's theory of heights on algebraic stacks is an appropriate one. This is expanded in the subsection below. 

\subsection{Integral Points on $M$-Curves}
Here we show that the the height (\ref{eq:MHeight}) can be used to obtain information about integral points on $\mathcal{X}$. In particular, the set of integral points is contained in the set of points where (\ref{eq:MHeight}) is equal to 1. When we take $K=\bQ$ we see that this condition is sufficient. In other words the $S$-integral points are those where the stacky part of the height is trivial. Following Darmon \cite{MCurves} we have the following notion of integral points on an $M$-curve:
\begin{definition}[Darmon] \label{DarInt} 

Let $\X=(X;P_1,m_1;...;P_r,m_r)$ be a $M$-curve over a number field $K$, $S$ a finite set of places of $K$ containing all primes of bad reduction. Let $\underline{X}$ be a smooth proper model for $X$ over $\O_{K,S}$. The $(\underline{X},S)$-integral points of $\X$ (usually abbreviated to $S$-integral points of $\X$) are the points $t\in X(K)$ such that

\begin{equation}\label{eq:SInt}
(t\cdot P)_\nu\equiv 0\mod m_P    
\end{equation}
for all $P\in X(K)$ and  $\nu\notin S$. 

\end{definition}

We shall prove the following theorem:

\begin{theorem} \label{height MT}
Let $\X=(X;P_1,m_1;....;P_r,m_r)$ be an $M$-curve over $K$ satisfying our assumptions and choose $S$ and a model $\underline{X}$ as we have specified. Then we have the following conclusions.

\begin{enumerate}
    \item \[\X(\O_{K,S,\underline{X}})\subseteq \bigcap_{m_P>1}X(P;K)\]
    where  $\X(P;K)=\{t\in X(K)\colon H(P,t)=1\}$.
    
    \item If $K=\bQ$ then
    \[\X(\O_{K,S,\underline{X}})= \bigcap_{m_P>1}X(P;K).\]
    In particular, the set of $S$-integral points of $\X$ is precisely the set of points where $H(P,t)=1$ for all $P$ with $m_P>1$. 
    
\end{enumerate}

\end{theorem}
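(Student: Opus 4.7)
The plan is to argue both inclusions directly from the definitions, with the forward direction (1) being essentially tautological and the reverse direction (2) relying crucially on unique factorization in $\bZ$.

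For part (1), I would fix $t \in \X(\O_{K,S,\underline{X}})$ and show $H(P,t) = 1$ for every $P$ with $m_P > 1$. By Definition \ref{DarInt}, $S$-integrality of $t$ means that $(t \cdot P)_\nu \equiv 0 \pmod{m_P}$ for every $\nu \notin S$. Therefore each local factor $\lambda_\nu(P,t) = \textnormal{N}(\fp_\nu)^{(t \cdot P)_\nu}$ is a perfect $m_P$-th power in $\bZ_{\geq 1}$. Since only finitely many $\nu$ produce positive intersection with a fixed point $P \neq t$, the product $\lambda(P,t)$ is a finite product of $m_P$-th powers, hence itself an $m_P$-th power; its $m_P$-free part is therefore $1$, yielding $H(P,t) = 1$ and $t \in X(P;K)$.

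For part (2), assume $K = \bQ$ and $t \in \bigcap_{m_P > 1} X(P;K)$. The key observation is that $\{\textnormal{N}(\fp_\nu) : \nu \notin S\}$ is precisely the set of rational primes outside $S$, each arising as the norm of a unique $\fp_\nu$. If $H(P_i,t) = 1$ for some $P_i$ with $m_{P_i} > 1$, then $\lambda(P_i,t) = \prod_\nu p_\nu^{(t \cdot P_i)_\nu}$ is an $m_{P_i}$-th power in $\bZ$, and unique factorization of integers forces $m_{P_i} \mid (t \cdot P_i)_\nu$ for every $\nu \notin S$. Since $m_P = 1$ for every $P \notin \{P_1,\ldots,P_r\}$ renders the congruence condition trivial there, gathering these divisibilities over the finitely many $P_i$ with $m_{P_i} > 1$ recovers exactly Darmon's definition of $S$-integrality, so $t \in \X(\O_{K,S,\underline{X}})$.

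The principal subtlety, and the reason part (2) is restricted to $K = \bQ$, is that over a general number field the assignment $\nu \mapsto \textnormal{N}(\fp_\nu)$ need not be injective: distinct primes of $\O_K$ above the same rational prime, or primes of larger residue degree, can share norms. Consequently an identity $\lambda(P,t) = y^{m_P}$ in $\bZ$ no longer disentangles into divisibility of each individual exponent $(t \cdot P)_\nu$, and one could a priori have $H(P,t) = 1$ without $t$ being integral. Over $\bQ$ this pathology cannot occur, and unique factorization delivers the converse without obstruction.
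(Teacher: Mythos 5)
Your proof is correct and follows essentially the same route as the paper: part (1) is the observation that $m_P \mid (t\cdot P)_\nu$ makes each local factor an $m_P$-th power, and part (2) invokes unique factorization in $\bZ$ together with the bijection between finite places of $\bQ$ and rational primes to disentangle the exponents. Your version is if anything slightly cleaner, since you avoid the paper's auxiliary decomposition $(t\cdot P)_\nu = m_P^{e_{\nu,P}(t)} q_{\nu,P}(t)$ and instead argue directly from the divisibility, and your closing remark about why $K=\bQ$ is needed matches the paper's own comment about the role of the residue degree $f(\nu)$ and splitting of primes.
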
 
Fix a prime $\nu\notin S$ and write $(t\cdot P)_\nu=m_{P}^{e_{\nu,P}(t)}\cdot q_{\nu,P}(t)$ where $e_{\nu,P}(t)\geq 0$ and $q_{\nu,P}(t)\geq 0$ is not divisible by $m_P$. In other words $q_{\nu,P}(t)$ is the $m_P$-free part of $(t\cdot P)_\nu$. Set $\textnormal{N}(\fp_\nu)=p_\nu^{f(\nu)}$. Then \begin{equation}\label{eq:locterms}
    \lambda_\nu(P,t)=p_\nu^{f(\nu)(t\cdot P)_\nu}=p_\nu^{m_{P}^{e_{\nu,P}(t)}\cdot q_{\nu,P}(t)\cdot f(\nu)}
\end{equation}
and
\begin{equation}\label{eq:expanded}
\lambda(P_i,t)=\prod_{\nu\notin S}p_\nu^{m_{P_i}^{e_{\nu,P_i}(t)}\cdot q_{\nu,P_i}(t)\cdot f(\nu)}
\end{equation}
Using the functions $\lambda(P,t)$ we can find subsets of the rational points that contain all integral points. 
\begin{proposition}\label{prop:cuttingout}
Suppose that $m_P>1$. Define $\X(P;K)=\{t\in X(K)\colon H(P,t)=1\}$. Then \[\X(\O_{K,S,\underline{X}})\subseteq \X(P;K).\]
\end{proposition}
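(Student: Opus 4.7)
The plan is to unwind the definitions: the condition $H(P,t) = 1$ asserts that $\lambda(P,t)$ is an $m_P$-th power in $\bQ^{\times}$, while the $S$-integrality condition says exactly that each local exponent $(t \cdot P)_\nu$ is divisible by $m_P$. These two facts fit together almost tautologically.

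Concretely, I would start with a point $t \in \X(\O_{K,S,\underline{X}})$ and a point $P$ with $m_P > 1$. By Definition \ref{DarInt}, for every place $\nu \notin S$ we have $(t \cdot P)_\nu \equiv 0 \pmod{m_P}$. Using the notation of (\ref{eq:locterms}) this says $e_{\nu,P}(t) \geq 1$ for every $\nu$ where $q_{\nu,P}(t) \ne 0$, or equivalently $(t \cdot P)_\nu = m_P \cdot n_\nu$ for some non-negative integer $n_\nu$. Substituting into $\lambda_\nu(P,t) = N(\fp_\nu)^{(t \cdot P)_\nu}$ gives
\[
\lambda_\nu(P,t) \;=\; N(\fp_\nu)^{m_P n_\nu} \;=\; \bigl(N(\fp_\nu)^{n_\nu}\bigr)^{m_P},
\]
so each local factor is a perfect $m_P$-th power of a positive integer.

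Next I would take the product over all $\nu \notin S$. Since only finitely many $n_\nu$ are nonzero (because $P$ and $t$ reduce to the same point modulo only finitely many primes), the product is a finite product, and the product of $m_P$-th powers is again an $m_P$-th power:
\[
\lambda(P,t) \;=\; \prod_{\nu \notin S} \lambda_\nu(P,t) \;=\; \Bigl(\prod_{\nu \notin S} N(\fp_\nu)^{n_\nu}\Bigr)^{m_P}.
\]
Hence $\lambda(P,t)$ lies in the image of the $m_P$-th power map on $\bZ_{>0}$, which means its $m_P$-free part equals $1$. By definition (\ref{eq:PMHeight}) this is precisely $H(P,t) = 1$, so $t \in \X(P;K)$.

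There is no real obstacle here beyond carefully matching conventions — the only subtle point is verifying that the product defining $\lambda(P,t)$ is genuinely finite (so there is no issue forming the $m_P$-th root), which follows because the scheme-theoretic intersection of a fixed section $t$ with a fixed section $P$ of the proper model $\underline{X}/\O_{K,S}$ is supported at finitely many closed points. Once that is noted, the implication $S\text{-integral} \Rightarrow H(P,t) = 1$ is immediate from Darmon's divisibility condition.
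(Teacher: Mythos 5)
Your proof is correct and follows the same route as the paper: both arguments use the $S$-integrality condition $m_P \mid (t\cdot P)_\nu$ to conclude that each local factor $\lambda_\nu(P,t) = N(\fp_\nu)^{(t\cdot P)_\nu}$ is an $m_P$-th power, so that the product $\lambda(P,t)$ is an $m_P$-th power and therefore $H(P,t)=1$. The only difference is cosmetic — you substitute $(t\cdot P)_\nu = m_P n_\nu$ directly, whereas the paper phrases the divisibility via its auxiliary decomposition $(t\cdot P)_\nu = m_P^{e_{\nu,P}(t)} q_{\nu,P}(t)$ (which it needs later for the converse over $\bQ$); your aside about the finiteness of the product is a correct and worthwhile clarification.
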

\begin{proof}
Suppose that $t$ is an $S$-integral point. Then $(t\cdot P)_\nu\equiv 0\mod m_{P}\Rightarrow e_{\nu,P}(t)>0$ for all $\nu\notin S$. Thus 

\[\lambda(P,t)=\prod_{\nu\notin S}p_\nu^{m_{P_i}^{e_{\nu,P_i}(t)}\cdot q_{\nu,P_i}(t)\cdot f(\nu)}=\left(\prod_{v\notin S}p_\nu^{m_{P_i}^{e_{\nu,P_i}(t)-1}\cdot q_{\nu,P_i}(t)\cdot f(\nu)}\right)^{m_P},\]
whence $H(P,t)=1$ as $\lambda(P,t)$ is an $m_P$-power.
\end{proof}
We see that each point $P$ with multiplicity $m_P>1$ imposes a height dropping condition on the set of integral points. Thus to study integral points it suffices to study

\[\X(\O_{K,S,\underline{X}})\subseteq \bigcap_{m_P>1}X(P;K).\]

There is the following easy consequence that may be useful if one has access to the height $H_{\X,S,\underline{X},\L}$ but perhaps not the local factors.

\begin{corollary}
Let $H_\L$ be an ample height with $H_\L(t)>0$ for all $t\in X(K)$.Then

\[\X(\O_{K,S})\subseteq \{t\in X(K)\colon H_{\X,\L}(t)=H_{\L}(t)\}.\]

\end{corollary}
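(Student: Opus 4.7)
The plan is to deduce this essentially for free from Proposition \ref{prop:cuttingout} together with the definition of the height \eqref{eq:MHeight}. The strategy has two steps: first, observe that the stacky contribution of each marked point with trivial multiplicity is tautologically equal to $1$; second, for the finitely many marked points with $m_{P_i} > 1$, invoke Proposition \ref{prop:cuttingout} to show that the local factor is again $1$ when $t$ is $S$-integral. Combining these with the multiplicative decomposition \eqref{eq:MHeight} yields the inclusion.

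In more detail, fix $t \in \X(\O_{K,S})$. Recall from \eqref{eq:MHeight} that
\[
H_{\X,S,\underline{X},\L}(t) \;=\; H_\L(t) \prod_{i=1}^r H_{S,\underline{X}}(P_i,t),
\]
and from the convention immediately after \eqref{eq:PMHeight} that $H(P_i,t) = 1$ whenever $m_{P_i} = 1$. Hence the product collapses to
\[
\prod_{i=1}^r H_{S,\underline{X}}(P_i,t) \;=\; \prod_{m_{P_i} > 1} H_{S,\underline{X}}(P_i,t).
\]
For each $i$ with $m_{P_i} > 1$, Definition \ref{DarInt} guarantees that $(t \cdot P_i)_\nu \equiv 0 \pmod{m_{P_i}}$ for every $\nu \notin S$, which is exactly the hypothesis used in the proof of Proposition \ref{prop:cuttingout}. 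Applying that proposition gives $H_{S,\underline{X}}(P_i,t) = 1$ for each such $i$.

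Putting these facts together, the full stacky part is $1$, so $H_{\X,\L}(t) = H_\L(t)$, which places $t$ in the set on the right. The positivity assumption $H_\L(t) > 0$ ensures the equality is not a degenerate $0 = 0$ identity, so the inclusion is nontrivial. There is no real obstacle here; the result is simply the multiplicative packaging of Proposition \ref{prop:cuttingout} across the finitely many marked points of multiplicity greater than $1$.
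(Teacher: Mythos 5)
Your proof is correct and is exactly what the paper has in mind: the corollary is stated as an ``easy consequence'' of Proposition \ref{prop:cuttingout}, and your argument simply spells out the multiplicative bookkeeping that turns ``$H(P,t)=1$ for all $P$ with $m_P>1$'' into ``$H_{\X,\L}(t)=H_\L(t)$''. The only small imprecision is your remark on the role of the hypothesis $H_\L(t)>0$: the inclusion itself holds with or without it, since the implication you prove runs from integrality to triviality of the stacky factor and never needs to divide by $H_\L(t)$. The positivity is there so that membership in the right-hand set is genuinely equivalent to the stacky part being $1$ (and the set is not trivially enlarged by points where $H_\L$ vanishes), not to validate the inclusion per se. This does not affect the correctness of your argument.
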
 

The integral points are contained in the locus where the height can be computed classically, in other words where the stacky part of the height is trivial. The  difference in these sets can be potentially explained by interactions between $f(\nu)=[\O_K/\fp_\nu\colon \bZ/p_\nu]$ and the integers $m_P$ and the splitting of the primes $p_\nu$ in $K$. 

\begin{proof}[Proof of Theorem \ref{height MT}]
We have already shown part $(1)$ of Theorem \ref{height MT} in (\ref{prop:cuttingout}). We turn to part $(2)$ and assume that $K=\bQ$. We know that that $\bigcap_{m_P>1}X(P;\bQ)\subseteq \X(\O_{\bQ,S,\underline{X}})$ by (\ref{prop:cuttingout}). We now show the reverse inclusion. Let $t\in X(\bQ)$ with $H(P,t)=1$ for all $P$ with $m_P>1$. Since $K=\bQ$ we have that $\textnormal{N}(\fp_\nu)=p_\nu$ and $f(\nu)=1$ for all finite places $\nu$. Fix $P$ with $m_P>1$. Towards a contradiction suppose that $(t\cdot P)_{\nu_0}\neq 0 \mod m_P$ for some $\nu_0\notin S$.  Then $e_{\nu_0,P}(t)=0$. Notice that $H(P,t)=1$ means that $\lambda(P,t)$ is an $m_P$-power. Since if $\nu\neq \nu^\prime$ we have that $p_\nu\neq p_{\nu^{\prime}}$ we have by unique factorization of integers that

\[\lambda(P,t)=\prod_{\nu\notin S}p_\nu^{m_{P}^{e_{\nu,P}(t)}\cdot q_{\nu,P}(t)}=(\prod_{\nu\notin S}p_\nu^{z_\nu(t)})^{m_P}\]

for some integers $z_\nu(t)$. In particular for $\nu_0$ we have 

\[p_{\nu_0}^{m_{P}^{e_{\nu_0,P}(t)}\cdot q_{\nu_0,P}(t)}=p_{\nu_0}^{{q_{\nu_0,P}(t)}}=p_{\nu_0}^{z_{\nu_0}(t)m_P}\]

Thus $z_{\nu_0}(t)m_P=q_{\nu_0,P}(t)$ which contradicts $q_{\nu_0,P}(t)$ being indivisible by $m_P$. Thus for all $m_P>1$ and $\nu\notin S$ we have $(t\cdot P)_\nu\equiv 0 \mod m_P$ and $t$ is an $S$-integral point of $\X$ by definition.
\end{proof}

In the next subsection we demonstrate our height in the simplest cases, where the base curve is $\bP^1$ and we have three points with multiplicity exceeding one. 

\subsection{Main example - $\bP^1$ with three points of large multiplicity}\label{subsec:mainex}
Now we apply these definitions in the case relevant to the conjecture of \cite{ESD-B}. Let $X=\bP^1_\bQ$ and $S=\{\nu_\infty\}$ and take $\L$ to be $\O_{\bP^1}(1)$ so the ample height is the usual one. Now we define the $M$-curve

\[\bP^1_{p,q,r}:=(\bP^1_\bQ;0,p;-1,q;\infty,r)\]
where $-1$ is used instead of 1 to match the constructions in \cite{ESD-B}. Let $t=[a:b]\in \bP^1(\bQ)-\{\infty\}$ with $a,b$ coprime integers. Then we have that

\[(t\cdot 0)_\nu=\ord(a)_\nu,\ (t\cdot (-1))_\nu=\ord_\nu(a+b),\ (t\cdot\infty)_\nu=\ord_\nu(b)\]
for all finite primes $\nu$. The product formula gives in our specific case gives that \[\lambda(0,t)=\mid a
\mid,\lambda(-1,t)=\mid a+b
\mid,\textnormal{ and, }\lambda(\infty,t)=\mid b
\mid.\]
Now we consider our points up to $m_P$-powers. That is when $t=[a:b]$ with $a,b$ non-zero coprime integers we consider the image 

\[\overline{\lambda(P,t)}\in \bQ^*/(\bQ^*)^{m_P}\] 
and consider this to be the local contribution to the height of $t$ at $P$. The contribution of these local heights and the global height now gives for $t=[a:b]$ with $a,b$ coprime and non-zero integers that

\begin{align*}
H_{\bP^1_{p,q,r}}([a:b])&=H_{\bP^1_{p,q,r}}(0,[a:b])H_{\bP^1_{p,q,r}}(-1,[a:b])H_{\bP^1_{p,q,r}}(\infty,[a:b])H_{\bP^1_\bQ}([a:b])\\&=p\textnormal{-free part}(\mid a\mid)\cdot q\textnormal{-free part}(\mid a+b\mid)\cdot r\textnormal{-free part}(\mid b\mid)\cdot \max(\mid a\mid,\mid b\mid )
\end{align*}
Taking $p=q=r=2$ we obtain 

\[H_{\bP^1_{2,2,2}}([a:b])=2\textnormal{-free part}(\mid a\mid)\cdot 2\textnormal{-free part}(\mid a+b\mid)\cdot 2\textnormal{-free part}(\mid b\mid)\cdot \max(\mid a\mid,\mid b\mid )\]
which is the desired height function of (\cite{ESD-B}). \\

These discussions lead to the following question: \\

\emph{How many points $[a:b]\in \bP^1(\bQ)$ satisfy}

\[p\textnormal{-free part}(\mid a\mid)\cdot q\textnormal{-free part}(\mid a+b\mid)\cdot r\textnormal{-free part}(\mid b\mid)\cdot \max(\mid a\mid,\mid b\mid )\leq T?\]

More generally:\\

\emph{Let $\X=(X;P_1,m_1;...;P_r,m_r)$ be a $M$-curve over a number field $K$, $S$ a finite set of places of $K$ containing the infinite places and all primes of bad reduction. Let $\underline{X}$ be a smooth proper model for $X$ over $\O_{K,S}$. Then how many points $t\in X(K)$ satisfy}

\[H_{\X,S,\underline{X},\L}(t)\leq T\]
\emph{Can one find an asymptotic  formula for the number of such points?} \\

The rest of this article is dedicated to analyzing the case $p=q=r=2$; that is, to the proof of Theorem \ref{MT}.

%%%%%%%%%%%%%%%%%%%%%%%%%%%%%%%%%%%%%%%%%%%%%%%%%%%%
\section{Counting rational points on $\X_\bQ = (\bP_\bQ^1; 0,2;-1,2;\infty,2)$}
\label{count sec}

In this section, we prove Theorem \ref{MT}. To do so we will show that $N(T) = O \left(T^{1/2} (\log T)^3 \right)$ and give a separate argument to show that $N(T) \gg T^{1/2} (\log T)^3$. The incompatibility of these two arguments represents the main obstacle as to why an asymptotic formula for $N(T)$ remains elusive. \\ 

We consider the problem of counting integral points on the variety defined by (\ref{variety}), subject to the constraint
\begin{equation} \label{ht bd} 0 < |x_2 x_3 (x_1 y_1)^2| \leq T, |x_1 y_1^2| \geq |x_2 y_2^2|.  \end{equation} 

To obtain the upper bound we must dissect (\ref{ht bd}) into suitable ranges. When $|x_1 x_2 x_3| \leq T^{1/2}$ we fix $x_1, x_2, x_3$ and treat (\ref{variety}) as a diagonal ternary quadratic form, say $Q_\Bx$. It is then the case that 
\begin{equation} \label{yi bd} |y_i| \leq \frac{T}{|x_1 x_2 x_3| \cdot |x_i|}\end{equation} 
for $i = 1,2,3$, and by Corollary 2 of \cite{Brow-HB} we then have the estimate
\[O \left(d(x_1 x_2 x_3) \left(\frac{T^{1/2}}{|x_1 x_2 x_3|} + O(1) \right) \right) \]
for the number of $\By \in \bZ_{\ne 0}^3$ satisfying (\ref{ht bd}) and (\ref{variety}) provided that the quadratic form $Q_\Bx$ has a rational zero. Otherwise it is clear that there will be no contribution. Thus we must estimate 
\[\sum_{\substack{1 \leq |x_1 x_2 x_3| \leq T^{1/2} \\ Q_\Bx \text{ has a rational zero}}} d(x_1 x_2 x_3).  \]
This is similar to the work of Guo in \cite{Guo}, except he counted with respect to the height $\lVert \Bx \rVert_\infty$. Nevertheless the techniques are similar, and again this may be of independent interest. \\

Next we must deal with the case when $|x_1 x_2 x_3| \geq T^{1/2}$. For this it suffices to observe from (\ref{yi bd}) that $|x_1 x_2 x_3| \geq T^{1/2}$ implies 
\[|y_1 y_2 y_3| \leq \frac{T^{3/2}}{(x_1 x_2 x_3)^2} \leq T^{1/2}.\]
We then treat (\ref{variety}) as a linear form $L_\By$ in $\Bx$. We use this to show that the contribution for each $\By$ is $O\left(T^{1/2} |y_1 y_2 y_3|^{-1} + 1 \right)$, which gives an acceptable contribution upon summing over $\By$. \\

For the lower bound, we first restrict $y_1, y_2, y_3 \in \bZ_{\ne 0}$ satisfying 
\[|y_1 y_2 y_3| \leq T^\delta \]
for some explicit $\delta > 0$ to be specified later. We note that to obtain the correct order of magnitude it is permissible to choose any $\delta > 0$. \\

Having fixed $\By = (y_1, y_2, y_3)$, we consider the simultaneous conditions (\ref{variety}) and (\ref{ht bd}). This gives rise to a binary form inequality of the shape 
\begin{equation} \label{bin form in} |x_1^2 x_2 (y_1^2 x_1 + y_2^2 x_2)| \leq T y_3^2 y_1^{-2}. 
\end{equation}
Because $|y_1 y_2 y_3|$ is small, we can count the number of solutions $\Bx$ to this inequality with reasonable precision. However, even with $|y_1 y_2 y_3|$ counting the number of solutions $\Bx$ with enough uniformity appears to still be a challenging task, because the binary form in (\ref{bin form in}) is singular. This difficulty is exacerbated by the fact that we will need to apply a square-free sieve eventually to produce triples $\Bx$ with each coordinate square-free. \\

To get around this issue, we simply count solutions to (\ref{bin form in}) with $x_1, x_2$ satisfying the inequalities 
\[|x_i y_i^2| \leq c_i T^{1/4} |y_1 y_2 y_3|^{1/2}, i = 1,2\]
for some positive numbers $c_1, c_2$. This has the effect that the long cusps inherent in (\ref{bin form in}) are removed, and reduces the problem to a more straightforward geometry of numbers question. 

%%%%%%%%%%%%%%%%%%%%%%%%
\subsection{Upper bounds}
\label{crude}

To obtain upper bounds, it is crucial to view (\ref{variety}) as a plane in $x_1, x_2, x_3$  when $|y_1 y_2 y_3| \leq T^{1/2}$ and viewing (\ref{variety}) as a conic in $y_1, y_2, y_3)$ when $|x_1 x_2 x_3| \leq T^{1/2}$. We call the former the \emph{linear case} and the latter the \emph{quadratic case}. We proceed to deal with the linear case below. 

\subsubsection{The linear case} 

In this subsection we shall suppose that $|y_1 y_2 y_3| \leq T^{1/2}$ is fixed, and count the triples $(x_1, x_2, x_3)$ and $(y_1, y_2, y_3)$ for which (\ref{variety}) holds. \\

The key is the following lemma on counting points in sublattices of $\bZ^2$: 

\begin{lemma} \label{lat lem} Let $\Lambda \subset \bZ^2$ be a lattice. Then for all positive real numbers $R_1, R_2$ the number of primitive integral points $\Bx \in \Lambda$ satisfying $|x_i| \leq R_i, i = 1,2$ is at most $O \left(R_1 R_2/\det(\Lambda) + 1 \right)$. 
\end{lemma}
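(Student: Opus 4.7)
The plan is to argue via Minkowski's theory of successive minima. Set $K = \{\Bx \in \bR^2 : |x_1| \leq R_1,\ |x_2| \leq R_2\}$, a symmetric convex body of volume $4R_1 R_2$, and let $\lambda_1 \leq \lambda_2$ denote the successive minima of $\Lambda$ with respect to $K$. Minkowski's second theorem yields
$$\lambda_1 \lambda_2 \asymp \frac{\det(\Lambda)}{R_1 R_2}.$$
In dimension two, one can additionally choose a $\bZ$-basis $\Bv_1, \Bv_2$ of $\Lambda$ with $\|\Bv_i\|_K \ll \lambda_i$, where $\|\cdot\|_K$ is the Minkowski functional of $K$.

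First I would dispose of the ``balanced'' regime $\lambda_2 \leq 1$. Here both $\Bv_1, \Bv_2$ lie in a bounded dilate of $K$, and writing $\Bx = a\Bv_1 + b\Bv_2$ reduces counting lattice points in $K$ to counting integer pairs $(a,b)$ in a transformed convex region of area $\ll R_1 R_2/\det(\Lambda)$. A standard successive-minima count gives
$$|\Lambda \cap K| \ll (1 + \lambda_1^{-1})(1 + \lambda_2^{-1}) \ll (\lambda_1 \lambda_2)^{-1} \asymp \frac{R_1 R_2}{\det(\Lambda)},$$
which already suffices without invoking primitivity.

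The interesting regime is $\lambda_2 > 1$. By the very definition of $\lambda_2$, the body $K$ cannot contain two linearly independent elements of $\Lambda$, so every nonzero $\Bx \in \Lambda \cap K$ must be an integer multiple $k\Bv_1$ of the first successive-minimum vector. This is where the primitivity hypothesis bites: the $\bZ^2$-primitivity of $k\Bv_1$ forces $|k|\cdot\gcd(v_{1,1},v_{1,2}) = 1$, so $|k|=1$ and $\Bv_1$ itself is $\bZ^2$-primitive. Hence at most the two primitive integral points $\pm\Bv_1$ can lie in $\Lambda \cap K$, contributing $O(1)$. Combining the two regimes yields the claimed $O(1 + R_1 R_2/\det(\Lambda))$.

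The main subtlety, rather than a serious obstacle, is precisely the use of primitivity in the skew regime $\lambda_2 > 1$. Without it a lattice with a very short first vector but a long second direction could contribute a chain of $\asymp \lambda_1^{-1}$ collinear lattice points in $K$, which far exceeds $R_1 R_2/\det(\Lambda)$; primitivity is what collapses this long thin cusp of collinear points down to its two endpoints, making the uniform bound hold across all lattice shapes.
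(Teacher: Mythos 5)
Your proof is correct, but it runs through more machinery than the paper does, and in doing so it actually patches a step the paper leaves implicit. The paper's proof is a two-line dichotomy: if the box contains two non-proportional primitive vectors $\Bx_1,\Bx_2$, then by convexity the box contains the parallelogram with vertices $\pm\Bx_1,\pm\Bx_2$, whose area is at least $\det\Lambda$ because $\Bx_1,\Bx_2$ span a sublattice of $\Lambda$; hence $R_1R_2\gg\det\Lambda$. Otherwise the box holds at most one primitive vector up to sign. Your regime $\lambda_2>1$ is exactly the paper's second alternative, with the same mechanism (primitivity in $\bZ^2$ collapsing the collinear chain $k\Bv_1$ to $k=\pm1$), just phrased via successive minima rather than via the nonexistence of a second independent vector in the box. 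Your regime $\lambda_2\le 1$ corresponds to the paper's first alternative, but here you go further: the paper only derives the inequality $R_1R_2\gg\det\Lambda$ and declares the proof complete, without explaining why the number of points in the box is then actually $O(R_1R_2/\det\Lambda)$; you supply this via Minkowski's second theorem and the standard count $|\Lambda\cap K|\ll(1+\lambda_1^{-1})(1+\lambda_2^{-1})\ll(\lambda_1\lambda_2)^{-1}$, correctly noting that in this balanced regime primitivity is not even needed. So the trade-off is: the paper's argument is shorter and entirely elementary but elides the final count in the balanced case, while yours invokes reduction theory in dimension two and is complete as written. One small point of care in your skew regime: you should first dispose of the subcase $\lambda_1>1$ (no nonzero lattice points in $K$ at all) before asserting that every nonzero $\Bx\in\Lambda\cap K$ is proportional to $\Bv_1$; with that noted, the argument is airtight.
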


\begin{proof} If the rectangle $[-R_1, R_2] \times [-R_2, R_2]$ contains at least two primitive vectors in $\Lambda$, say $\Bx_1, \Bx_2$, then since this rectangle is convex it contains the parallelogram with end points $\pm \Bx_1, \pm \Bx_2$. The area of this parallelogram is at least as large as $\det \Lambda$, since the lattice spanned by $\Bx_1, \Bx_2$ is a sublattice of $\Lambda$. It thus follows that
\[R_1 R_2 \gg \det \Lambda.\]
Otherwise, the rectangle $[-R_1, R_1] \times [-R_2, R_2]$ contains at most one primitive vector in $\Lambda$. This completes the proof. 
\end{proof}
The strength of this lemma is that it gives a strong upper bound even in lopsided boxes. \\

Given (\ref{variety}), it follows that there is at least one $i \in \{2,3\}$ such that
\[|x_i y_i^2|/2  \leq x_1 y_1^2 \leq 2|x_i y_i^2|, \]
whence
\[\frac{x_1 y_1^2}{2 y_i^{-2}} \leq  |x_i| \leq \frac{2 x_1 y_1^2}{ y_i^{2}}.\]
Without loss of generality, we assume that this holds for $i = 2$. Suppose that $M_1 \leq x_1 < 2M_1$. By (\ref{ht bd}), we have
\[|x_3| \leq \frac{T}{|x_2 x_1^2 y_1^2|},\]
whence
\begin{align*} |x_3| & \leq T \cdot \frac{2 y_2^2}{(x_1 y_1^2)(x_1^2 y_1^2)}  \\
& \leq \frac{2y_2^2 T}{M_1^3 y_1^4}
\end{align*}
Applying Lemma \ref{lat lem} to the lattice defined by the congruence $y_1^2 x_1 - y_3^2 x_3 \equiv 0 \pmod{y_2^2}$ which has determinant equal to $y_2^2$, there are 
\[O \left(M_1 \cdot \frac{T y_2^2}{M_1^3 y_1^4} \cdot \frac{1}{y_2^2} + 1 \right) = O \left(\frac{T}{M_1^2 y_1^4} + 1 \right)\]
possibilities for $x_1, x_3$, which then determines $x_2 = (y_1^2 x_1 - y_3^2 x_3)/y_2^2$. Similarly, applying Lemma \ref{lat lem} to the lattice defined by $y_1^2 x_1 + y_2^2 x_2 \equiv 0 \pmod{y_3^2}$, with determinant equal to $y_3^2$, gives the estimate
\[O \left(M_1 \cdot \frac{y_1^2 M_1}{y_2^2} \frac{1}{y_3^2} + 1\right) = O \left(\frac{M_1^2 y_1^2}{y_2^2 y_3^2} + 1 \right) \]
for the number of $x_1, x_2$, which then also determine $x_3$. The two bounds coincide when 
\[M_1 = \frac{T^{1/4} |y_2 y_3|^{1/2}}{|y_1|^{3/2}}, \]
and we get the bound
\[O \left(\frac{T^{1/2} |y_2 y_3| y_1^2}{y_2^2 y_3^2 |y_1|^3} + 1 \right) = O \left(\frac{T^{1/2}}{|y_1 y_2 y_3|} + 1\right) \]
for the number of $x_1, x_2, x_3$ given $y_1, y_2, y_3$. Thus, we obtain an acceptable estimate whenever $|y_1 y_2 y_3| \ll T^{1/2}$, since
\begin{align*} \sum_{1 \leq |y_1 y_2 y_3| \leq T^{1/2}} \frac{T^{1/2}}{|y_1 y_2 y_3|} + 1 & \ll T^{1/2} \sum_{n \leq T^{1/2}} \frac{d_3(n)}{n} + \sum_{n \leq T^{1/2}} d_3(n)
\end{align*}
It is well-known that
\[\sum_{n \leq Z} d_3(n) = Z (\log Z)^2 + O(Z \log Z). \]
By partial summation, we have
\begin{align*} \sum_{n \leq Z} \frac{d_3(n)}{n} & = Z^{-1} \sum_{n \leq Z} d_3(n) + \int_1^Z \left(\sum_{n \leq t} d_3(n) \right) \frac{dt}{t^2} \\
& \ll (\log Z)^2 + \int_1^Z \frac{(\log t)^2 dt}{t} \\
& \ll (\log Z)^3
\end{align*}
It follows that 
\[T^{1/2} \sum_{n \leq T^{1/2}} \frac{d_3(n)}{n} + \sum_{n \leq T^{1/2}} d_3(n) \ll T^{1/2} (\log T)^3.\]

\subsubsection{The quadratic case} It remains to deal with the case when $|y_1 y_2 y_3| \gg T^{1/2}$, where we instead fibre over $\Bx$ and consider zeroes of the corresponding diagonal quadratic forms $Q_\Bx$. Since
\[|x_i y_i^2| \ll x_1 y_1^2 \]
for $i = 1,2$ by assumption, it follows that 
\[|x_1 x_2 x_3 y_1^2 y_2^2 y_3^2| \leq x_1^3 y_1^6,\]
hence
\[|y_1^2 y_2^2 y_3^2| \ll \frac{x_1^3 y_1^6}{x_1 |x_2 x_3|}.\]
If $|x_1 x_2 x_3| \gg T^{1/2}$, then 
\[x_1^3 y_1^6 \gg T^{3/2} \Leftrightarrow x_1 y_1^2 \gg T^{1/2}.\]
This implies that
\[|x_1 x_2 x_3| \cdot x_1 y_1^2 \gg T,\]
which violates (\ref{ht bd}) if the implied constants are sufficiently large. It thus follows that we must have $|x_1 x_2 x_3| \ll T^{1/2}$ in this case. \\

We now fix $x_1, x_2, x_3$ and consider (\ref{variety}) as a ternary quadratic form in $y_1, y_2, y_3$. We shall require the following version of Corollary 2 in \cite{Brow-HB}, which is an analogue of Lemma \ref{lat lem}: 

\begin{lemma} \label{quad lem} Let $x_1, x_2, x_3$ be pairwise co-prime square-free integers. Let $R_1, R_2, R_3$ be positive real numbers. Then the number of primitive solutions $y_1, y_2, y_3$ to the equation 
\[x_1 y_1^2 + x_2 y_2^2 = x_3 y_3^2\]
with $|y_i| \leq R_i$ is bounded by 
\[O \left( d(x_1 x_2 x_3) \left( \left(\frac{R_1 R_2 R_3}{|x_1 x_2 x_3|} \right)^{1/3} + 1 \right) \right).\]
\end{lemma}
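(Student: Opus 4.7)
The plan is to deduce this as a direct specialization of Corollary 2 of Browning--Heath-Brown \cite{Brow-HB} to the diagonal ternary quadratic form
\[
Q(\By) \;=\; x_1 y_1^2 + x_2 y_2^2 - x_3 y_3^2,
\]
which has $|\det Q| = |x_1 x_2 x_3|$ thanks to the pairwise coprimality of the $x_i$, and whose content is $1$ thanks to their square-freeness. The theorem of loc.~cit.\ bounds the number of primitive integer zeros of a non-singular ternary quadratic form inside the box $|y_i| \le R_i$ by a divisor factor $d(|\det Q|)$ times $((R_1 R_2 R_3/|\det Q|)^{1/3} + 1)$; specialising to our $Q$ produces exactly the stated bound. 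The main work is thus merely the verification that the hypotheses of \cite{Brow-HB} apply to our diagonal form.

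For a self-contained derivation, I would proceed as follows. If $Q = 0$ has no non-trivial rational zero, then the count is zero and we are done; otherwise, fix a primitive integer solution $\Ba = (a_1, a_2, a_3)$. The pencil of lines in $\bP^2$ through $\Ba$ parametrises all rational points on the conic $\{Q = 0\}$ via a degree-two map $\bP^1 \to \{Q = 0\}$, under which each $y_i$ becomes a quadratic form $f_i(u, v) \in \bZ[u, v]$ with coefficients of size $O(|x_1 x_2 x_3|)$, divided by a common denominator dividing a power of $x_1 x_2 x_3$.

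The primitivity of $(y_1, y_2, y_3)$ localises to congruence conditions on $(u, v)$ modulo each prime divisor of $x_1 x_2 x_3$, forcing $(u, v)$ into one of $O(d(|x_1 x_2 x_3|))$ sublattices of $\bZ^2$. On each such sublattice, the size constraints $|y_i| \le R_i$ translate to inequalities of the form $|f_i(u, v)| \le R_i \cdot |x_1 x_2 x_3|^{O(1)}$; applying H\"older's inequality to the product of these three binary quadratic inequalities controls the area of the resulting region, and a geometry-of-numbers argument in the spirit of Lemma \ref{lat lem} converts this into a bound of $O((R_1 R_2 R_3/|x_1 x_2 x_3|)^{1/3} + 1)$ for the primitive lattice-point count.

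The main obstacle in the self-contained route is precisely this last step: uniformly controlling the primitive-lattice-point count in boxes that may be highly lopsided, at the interface between the regime where the constant term dominates and the regime where the $(R_1 R_2 R_3/|x_1 x_2 x_3|)^{1/3}$ term dominates, while keeping the dependence on the sublattice index at the level of a single factor of $d(|x_1 x_2 x_3|)$. Appealing to \cite{Brow-HB} bypasses this difficulty entirely, and since the statement is advertised as a ``version'' of their Corollary 2, I would take the direct-appeal route.
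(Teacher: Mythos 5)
The paper gives no proof of Lemma \ref{quad lem}, presenting it simply as ``the following version of Corollary 2 in \cite{Brow-HB}''; your decision to cite that result directly, with $|\det Q| = |x_1 x_2 x_3|$ and the pairwise coprimality of the $x_i$ ensuring that the relevant auxiliary invariant (the gcd of the $2\times 2$ minors of the Gram matrix) equals $1$, is therefore exactly what the paper does. One small correction to your bookkeeping: it is pairwise coprimality, not square-freeness, that forces both the content of $Q$ and that minor-gcd to be $1$.
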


Since $|x_i y_i^2| \ll x_1 y_1^2$ for $i = 1,2$, it follows that
\[|x_1 x_2 x_3 (x_i y_i^2)| \ll |x_1 x_2 x_3 (x_1 y_1^2)| \leq T\]
for $i = 1,2$, whence
\[|(x_1 y_1)^2 x_2 x_3|, |(x_2 y_2)^2 x_1 x_3|, |(x_3 y_3)^2 x_1 x_2| \ll T.\]
This implies that
\[(y_1 y_2 y_3)^2 (x_1 x_2 x_3)^4 \ll T^3,\]
hence
\[|y_1 y_2 y_3| \ll \frac{T^{3/2}}{(x_1 x_2 x_3)^2}.\]
Lemma \ref{quad lem} then implies that for fixed $x_1, x_2, x_3$ the number of primitive $\By = (y_1, y_2, y_3)$ satisfying (\ref{variety}) is 
\[O \left(d(x_1 x_2 x_3) \left(\frac{T^{1/2}}{|x_1 x_2 x_3|} + 1 \right) \right). \]
We now sum over primitive $\Bx \in \bZ^3$ satisfying $|x_1 x_2 x_3| \ll T^{1/2}$, with the property that the quadratic form $Q_\Bx$ given by (\ref{variety}) has a rational zero. By the Hasse-Minkowski theorem, this is tantamount to the form $Q_\Bx(\By) = x_1 y_1^2 + x_2 y_2^2 - x_3 y_3^2$ being everywhere locally soluble. The estimation of this is interesting on its own right and will be handled in a separate subsection.

%%%%%%%%%%%%%%%%%%%%%%%%%%%%%%%
\subsection{Counting soluble ternary quadratic forms} 

In this section, we consider the set
\[\S = \{(x_1, x_2, x_3) \in \bZ^3 : x_1, x_2, x_3 > 0, \gcd(x_1, x_2) = \gcd(x_1, x_3) = \gcd(x_2, x_3) = 1, \]
\[x_i \text{ square-free for } i = 1,2,3, x_1 y_1^2 + x_2 y_2^2 - x_3 y_3^2 \text{ is everywhere locally soluble}\}.\]
By a well-known theorem of Legendre (see \cite{Guo}) the indicator function for $\S$ is given by
\begin{equation} f_\S(x_1, x_2, x_3) = \left(2^{-\omega(x_1)} \sum_{a_1 | x_1} \left(\frac{x_2 x_3}{a_1} \right) \right) \left(2^{-\omega(x_2)} \sum_{a_2 | x_2} \left(\frac{x_1 x_3}{a_2} \right) \right) \left(2^{-\omega(x_3)} \sum_{a_3 | x_3} \left(\frac{-x_1 x_2}{a_3} \right) \right).
\end{equation}
We will now combine the ideas given in \cite{Guo} and those in \cite{FK}. \\

Put 
\begin{align*} \S(X) & = \sum_{1 \leq x_1 x_2 x_3 \leq X}  \sum_{(x_1, x_2, x_3) \in \S} \frac{d(x_1 x_2 x_3)}{x_1 x_2 x_3} \\ & = \sum_{1 \leq |x_1 x_2 x_3| \leq X} \frac{d(x_1 x_2 x_3)}{x_1 x_2 x_3}  f_\S(x_1, x_2, x_3). \end{align*}
Since $x_1, x_2, x_3$ are pairwise coprime and square-free, it follows that
\[d(x_1 x_2 x_3) = 2^{\omega(x_1 x_2 x_3)} = 2^{\omega(x_1)} \cdot 2^{\omega(x_2)} \cdot 2^{\omega(x_3)}, \]
where $\omega(n)$ is the number of distinct prime factors of $n$. It follows that
\begin{equation} \S(X) = \sum_{1 \leq x_1 x_2 x_3 \leq X} \frac{2^{\omega(x_1 x_2 x_3)}}{x_1 x_2 x_3} f_\S(x_1, x_2, x_3) 
\end{equation}
\[ = \sum_{1 \leq x_1 x_2 x_3 \leq X} \frac{1}{x_1 x_2 x_3} \left(1 + \left(\frac{x_2 x_3}{x_1} \right) \left(\frac{x_1 x_3}{x_2} \right) \left(\frac{-x_1 x_2}{x_3} \right)  + \sum_g g(x_1, x_2, x_3)\right),\]
where $g$ expresses a product of Jacobi symbols. The sum 
\begin{equation} \label{S1X} \S_1(X) = \sum_{1 \leq |x_1 x_2 x_3| \leq X} \frac{1}{x_1 x_2 x_3} \left(1 + \left(\frac{x_2 x_3}{x_1} \right) \left(\frac{x_1 x_3}{x_2} \right) \left(\frac{-x_1 x_2}{x_3} \right)  \right) \end{equation}
is expected to contribute the main term while the sum 
\begin{equation} \label{char sum} \S_2(x) = \sum_{1 \leq x_1 x_2 x_3 \leq X} \frac{1}{x_1 x_2 x_3} \sum_g g(x_1, x_2, x_3) \end{equation}
is expected to be negligible, due to the cancellation of characters. \\

By partial summation, we obtain:
\begin{equation} \label{par sum} \S_i(X) = \frac{1}{X} \Sigma_i(X) + \int_1^X \Sigma_i(t) \frac{t}{t^2},
\end{equation}
where
\[\Sigma_1(X) = \sum_{\substack{1 \leq |x_1 x_2 x_3| \leq X \\ x_1 x_2 x_3 \text{ square-free}  \\ Q_{(x_1, x_2, x_3)} \text{ is soluble} }} \left(1 + \left(\frac{x_2 x_3}{x_1} \right) \left(\frac{x_1 x_3}{x_2} \right) \left(\frac{x_1 x_2}{x_3} \right) \right) \]
and
\[\Sigma_2(X) = \sum_{1 \leq x_1 x_2 x_3 \leq X} \sum_g g(x_1, x_2, x_3). \]

Our situation differs from that of Guo in \cite{Guo} since we are counting over triples with $|x_1 x_2 x_3| \leq X$ rather than $\max\{|x_1|, |x_2|, |x_3|\} \leq X$, which introduces some difficulties. However, this is exactly analogous to the situation encountered by Fouvry and Kluners in \cite{FK}. \\

Our key proposition will be:  

\begin{proposition} \label{quad form prop} We have the asymptotic upper bound
\[\S(X) = O\left((\log X)^3 \right).\]
\end{proposition}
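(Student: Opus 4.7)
The plan is to open up the indicator $f_\S$ via Legendre's criterion so that the divisor factor $d(x_1 x_2 x_3) = 2^{\omega(x_1 x_2 x_3)}$ cancels exactly against the $2^{-\omega(x_i)}$ factors in $f_\S$, yielding
\[
\S(X) = \sum_{\substack{1 \leq x_1 x_2 x_3 \leq X \\ x_1, x_2, x_3 \text{ pairwise coprime, sqfree}}} \frac{1}{x_1 x_2 x_3} \prod_{i=1}^{3} \sum_{a_i \mid x_i} \chi_i(a_i),
\]
where $\chi_1(a_1) = \left(\frac{x_2 x_3}{a_1}\right)$, $\chi_2(a_2) = \left(\frac{x_1 x_3}{a_2}\right)$, $\chi_3(a_3) = \left(\frac{-x_1 x_2}{a_3}\right)$. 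I would split this into a diagonal piece (all $a_i = 1$), a triple-reciprocity piece (all $a_i = x_i$), and a remainder $\sum_g g(x_1, x_2, x_3)$ of mixed terms, matching the decomposition $\S(X) = \S_1(X) + \S_2(X)$ in \eqref{S1X}--\eqref{char sum}.

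For the diagonal piece, the number of pairwise coprime square-free triples $(x_1, x_2, x_3)$ with $x_1 x_2 x_3 = n$ equals $3^{\omega(n)} \mu^2(n)$, and a standard Mertens-type computation gives
\[
\sum_{n \leq X} \frac{3^{\omega(n)} \mu^2(n)}{n} \ll (\log X)^3,
\]
which already matches the target bound. For the triple-reciprocity piece I would apply the quadratic reciprocity law to collapse the product of three Jacobi symbols into a sign $\varepsilon(x_1, x_2, x_3) \in \{\pm 1\}$ depending only on the classes of $x_1, x_2, x_3$ modulo $8$; its contribution is then dominated (in absolute value) by the diagonal piece and yields the same $O((\log X)^3)$.

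The main obstacle is the mixed sum. For each triple $(a_1, a_2, a_3)$ in which at least one $a_j$ is a proper divisor of $x_j$, I would write $x_i = a_i b_i$ and extract cancellation from the inner sum over the $b_\ell$'s using a P\'olya--Vinogradov style bound: whenever $a_j > 1$, the relevant Jacobi symbol (after separating out the $a_\ell$ dependence by multiplicativity) is a non-principal character modulo $a_j$, and the corresponding inner sum has size $O(a_j^{1/2} \log a_j)$. The novelty compared with \cite{Guo} is that the admissible region $\{|x_1 x_2 x_3| \leq X\}$ is lopsided rather than a cube, so this character bound has to be combined with a dyadic decomposition of each $b_i$ and partial summation in the long variables, exactly as in the work of Fouvry--Kluners \cite{FK}. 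Summing the resulting saving of $a_j^{1/2 - \varepsilon}$ over all admissible $(a_1, a_2, a_3)$ absorbs the multiplicative divisor factors and produces again $O((\log X)^3)$. The three contributions are then combined via the partial summation identity \eqref{par sum} to conclude. The technical subtlety, which is where the bulk of the work will lie, is that the coprimality constraints $(a_i, b_i) = 1$ and $(x_i, x_j) = 1$ must be maintained by M\"obius inversion at each stage so as not to degrade the character sum estimates.
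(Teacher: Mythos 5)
Your overall decomposition matches the paper's: expand via Legendre's criterion, cancel $d(x_1x_2x_3)=2^{\omega(x_1x_2x_3)}$ against the $2^{-\omega(x_i)}$ prefactors, and split into a diagonal piece, a triple-reciprocity piece, and a remainder of mixed Jacobi symbols. Your treatment of the first two pieces is fine; the paper simply bounds $\S_1(X)$ by the triangle inequality as $\ll\sum_{n\le X}d_3(n)/n\ll(\log X)^3$, which is the same thing since $d_3(n)=3^{\omega(n)}$ on square-free $n$.

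The gap is in the mixed sum $\S_2$. A P\'olya--Vinogradov bound $O(a_j^{1/2}\log a_j)$ for the inner sum over a complementary variable of length $B$ only beats the trivial bound when $B\gg a_j^{1/2+\varepsilon}$. After the dyadic (or $\Delta$-adic) decomposition, the generic configuration has the modulus and the summation range of comparable size --- say both near $X^{1/3}$ --- and there P\'olya--Vinogradov gives literally nothing. Partial summation in the long variable cannot rescue this: it rearranges a sum but does not manufacture cancellation. The tool that actually handles the balanced case, and the one the paper and Fouvry--Kl\"uners rely on, is a bilinear large-sieve estimate for the Jacobi symbol (the Double Oscillation Lemma, Lemma 4.8 in the paper), which bounds $\sum_m\sum_n\alpha_m\beta_n\left(\tfrac{m}{n}\right)$ with a saving of $\min(M,N)^{-1/2+\varepsilon}$ by exploiting oscillation in \emph{both} variables simultaneously. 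This is a strictly stronger and qualitatively different input than P\'olya--Vinogradov, and your sketch contains nothing equivalent.

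There is also a second configuration you do not address: when one factor's range drops below $(\log X)^9$, the double-oscillation saving is useless, and the paper switches to a Siegel--Walfisz variant applied after peeling off the largest prime of the long factor (Lemma 4.9). For that step to work one must first truncate to $\omega(x_1x_2x_3)\ll\log\log X$ using Lemma 11 of Fouvry--Kl\"uners. The bulk of the proof is then the combinatorial case analysis over the boxes $[A_{ij},\Delta A_{ij})$, verifying that each configuration falls under the boundary estimate, double oscillation, Siegel--Walfisz, or is vacuous. That, and not the M\"obius bookkeeping for the coprimality constraints you single out, is where the real work lies.
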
 
In fact we can refine Proposition \ref{quad form prop} to give an asymptotic formula, but this is unnecessary for our purposes. \\

We 

We proceed to prove Proposition \ref{quad form prop} in the remainder of the section. We begin by showing that triples $(x_1, x_2, x_3)$ with $\mu^2(x_1 x_2 x_3) = 1$ and $\omega(x_1 x_2 x_3)$ large contribute negligibly. To wit, put
\[\S_2^{(r)}(X) = \sum_{\substack{1 \leq x_1 x_2 x_3 \leq X \\ \omega(x_1 x_2 x_3) = r}} \frac{1}{x_1 x_2 x_3} \sum_g g(x_1, x_2, x_3).\]
By the triangle inequality, it is clear that 
\[\left \lvert \S_2^{(r)}(X) \right \rvert \ll \sum_{\substack{n \leq X \\ \mu^2(n) = 1, \omega(n) = r}} \frac{d_3(n)}{n}. \]
By partial summation, we have
\[\sum_{\substack{n \leq X \\ \mu^2(n) = 1, \omega(n) = r}} \frac{d_3(n)}{n} = X^{-1} \sum_{\substack{n \leq X \\ \mu^2(n) = 1, \omega(n) = r}} d_3(n) + \int_1^X \left(\sum_{\substack{n \leq t \\ \mu^2(n) = 1, \omega(n) = r}} d_3(n) \right)\frac{dt}{t^2}. \]
To estimate the latter sum, we will need the following result, which is Lemma 11 in \cite{FK}:

\begin{lemma} There exists an absolute constant $B_0 \geq 1$ such that for every $r \geq 0$, we have
\[|\{n \leq X : \omega(n) = r, \mu^2(n) = 1\}| \leq B_0 \cdot \frac{X}{\log X} \cdot \frac{(\log \log X + B_0)^r}{r!} \]
\end{lemma}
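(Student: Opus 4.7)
The plan is to prove this classical Hardy--Ramanujan type estimate by induction on $r$, using only Chebyshev's bound $\pi(Y) \ll Y/\log Y$ and Mertens' theorem $\sum_{p \leq Y} 1/p = \log\log Y + O(1)$. Write $S_r(X) = |\{n \leq X : \omega(n) = r,\ \mu^2(n) = 1\}|$. The cases $r = 0$ (only $n=1$) and $r = 1$ (squarefree numbers with one prime factor are themselves primes) follow directly from these two classical estimates, with constants that will feed into $B_0$.

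For the inductive step, I would exploit the fact that each squarefree $n$ with $\omega(n) = r$ admits exactly $r$ factorizations $n = pm$ with $p$ prime, $\mu^2(m) = 1$, $\omega(m) = r-1$, and $p \nmid m$. Double counting yields
\[r S_r(X) = \sum_p \#\{m : pm \leq X,\ \mu^2(m) = 1,\ \omega(m) = r-1,\ p \nmid m\}.\]
The next step is to split this sum at $p = \sqrt{X}$. In the range $p \leq \sqrt{X}$, the inner count is bounded by $S_{r-1}(X/p)$, to which the inductive hypothesis applies; here one uses $\log(X/p) \geq \tfrac{1}{2}\log X$ and $\log\log(X/p) \leq \log\log X$, then invokes Mertens to bound $\sum_{p \leq \sqrt{X}} 1/p \leq \log\log X + O(1)$. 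In the range $p > \sqrt{X}$, I would instead fibre over $m$: for fixed squarefree $m \leq \sqrt{X}$ with $\omega(m) = r-1$, the number of primes $p \leq X/m$ is $\ll (X/m)/\log(X/m) \ll X/(m \log X)$, and the remaining sum $\sum_{m \leq \sqrt{X},\ \mu^2(m)=1,\ \omega(m)=r-1} 1/m$ is bounded by $\frac{1}{(r-1)!}(\log\log X + O(1))^{r-1}$ (either by induction on a weighted version, or by expanding the truncated Euler product).

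Combining the two pieces leads to an inequality of the form
\[r S_r(X) \leq B_0 \frac{X}{\log X} \cdot \frac{(\log\log X + B_0)^{r-1}}{(r-1)!} \cdot \bigl(\log\log X + O(1)\bigr),\]
and dividing by $r$ recovers the claimed bound. The main obstacle is not any single estimate but rather uniformity in $r$: one must select $B_0$ once and for all, large enough to swallow the $O(1)$ error generated at every stage of the induction so that it nests harmlessly inside the base $(\log\log X + B_0)$ without the constant drifting with $r$. A secondary issue is handling small $X$ (where $\log\log X$ can be negative or vanish); this is dispatched by inflating $B_0$ to make the inequality trivial for $X$ below a fixed threshold. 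An alternative route, which I would reserve as a sanity check, is via the Selberg--Delange method applied to the Dirichlet series $\sum_n \mu^2(n) z^{\omega(n)} n^{-s} = \zeta(s)^z G(s,z)$ followed by a saddle-point choice $z = r/\log\log X$; this recovers the same bound but requires more analytic machinery than the elementary induction above.
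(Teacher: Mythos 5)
The paper does not prove this statement; it is cited verbatim as Lemma~11 of Fouvry--Kl\"uners \cite{FK}. So there is no proof in the paper to compare against, and what is being evaluated is whether your sketch would stand alone as a proof of the classical Hardy--Ramanujan inequality.

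Your outline is the right general strategy (the identity $rS_r(X)=\sum_p \#\{m\le X/p:\ldots\}$, splitting the sum over $p$, and induction on $r$), and the base cases and the bound $\sum_{m\le X,\,\mu^2(m)=1,\,\omega(m)=r-1}1/m\le\frac{1}{(r-1)!}\bigl(\sum_{p\le X}1/p\bigr)^{r-1}$ are both correct. However, there is a genuine gap in the inductive step that you gesture at but do not resolve. When you apply the inductive hypothesis in the range $p\le\sqrt X$, you use $\log(X/p)\ge\frac12\log X$. This produces a \emph{multiplicative} factor of $2$ in front of $B_0$, not an additive $O(1)$ inside $\log\log X+B_0$. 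Consequently the inequality you arrive at is
\[
rS_r(X)\le 2B_0\cdot\frac{X}{\log X}\cdot\frac{(\log\log X+B_0)^{r-1}}{(r-1)!}\bigl(\log\log X+O(1)\bigr)+\text{(second piece)},
\]
and after absorbing the additive $O(1)$ and dividing by $r$, the best you can conclude is $S_r(X)\le 2B_0\cdot\frac{X}{\log X}\frac{(\log\log X+B_0)^r}{r!}$. The constant has doubled, and iterating the induction causes it to grow like $2^r$. Your proposed remedy --- ``select $B_0$ once and for all, large enough to swallow the $O(1)$ error'' --- only handles additive errors nested inside the base $(\log\log X+B_0)$; it cannot repair a multiplicative factor exceeding $1$ that sits \emph{outside} that base. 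This is precisely the subtlety in Hardy and Ramanujan's original argument. The standard repairs are genuinely different from what you describe: one can split at $X^{1/\log\log X}$ rather than $\sqrt X$ (so that $1/(1-\alpha)=1+O(1/\log\log X)$ is only a $(1+o(1))$-loss, and the accompanying deficit $\log\log X-\log\log\log X+O(1)<\log\log X$ in Mertens' sum creates the slack to absorb both the multiplicative loss and the large-$p$ piece), or one works with a weighted sum such as $\sum_{n\le X}\mu^2(n)z^{\omega(n)}$ and extracts the coefficient via a contour integral / saddle point, which you mention as a ``sanity check'' but which is in fact one of the cleaner ways to get a uniform constant. As written, your induction does not close, and the fix is not merely a matter of enlarging $B_0$.
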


Applying the lemma, we have for $\Omega = 30 (\log \log X + B_0)$
\begin{align*} \sum_{\substack{n \leq X \\ \mu^2(n) = 1, \omega(n) \geq \Omega}} d_3(n) & \ll \frac{X}{\log X} \sum_{r \geq \Omega} 3^r \cdot \frac{(\log \log X + B_0)^r}{r!} \\
& \ll \frac{X}{\log X} \sum_{r \geq \Omega} \left(\frac{3e (\log \log X + B_0)}{r} \right)^r \\
& \ll \frac{X}{\log X} \sum_{r \geq \Omega} \left(\frac{3e}{10} \right)^r,
\end{align*}
the final sum a convergent geometric series. Hence
\[\sum_{r \geq \Omega} \left(\frac{3e}{10} \right)^r \ll \left(\frac{3e}{10} \right)^{\Omega} \ll \frac{1}{\log X}. \]
We thus conclude that 
\begin{align} \sum_{r \geq \Omega} \left \lvert S_2^{(r)}(X) \right \rvert & \ll 1 + (\log X)^{-2} + \int_1^X \frac{dt}{t (\log t)^2}  \\
& = O(1) \notag 
\end{align}
and is thus negligible. \\

Note that $x_1, x_2, -x_3$ cannot all be the same sign, otherwise (\ref{variety}) will only have a trivial real solution. Hence the signs of $(x_1, x_2, x_3)$ must be $(+, +, +)$, or $(+, -, +)$, since we assumed $x_1 > 0$ and $x_1 y_1^2 \geq |x_2 y_2^2|$. By rearranging, we must thus assume $x_1, x_2, x_3 > 0$. \\

We then expand (\ref{char sum}) by writing $x_i = x_{i1}x_{i2}$ for $i = 1,2,3$, and
\[\sum_{\substack{1 \leq x_1 x_2 x_3 \leq X \\ \mu^2(x_1 x_2 x_3) = 1}} \sum_g g(x_1, x_2, x_3) = \sum_{\substack{(x_{11}x_{12})(x_{21}x_{22})(x_{31}x_{32})\leq X \\ 1 < x_{i1} < x_i \text{ for } 1 \leq i \leq 3}} \left(\frac{x_{21}x_{22} x_{31}x_{32} }{x_{11}} \right) \left( \frac{x_{11} x_{12} x_{31} x_{32}}{x_{21}} \right) \left(\frac{x_{11} x_{12} x_{21} x_{22}}{x_{31}} \right). \]
We now follow the strategy outlined in \cite{FK} and break up the set 
\[\{(x_{11}, x_{12}, x_{21}, x_{22}, x_{31}, x_{32}) \in \bN^6 : x_{11} x_{12} x_{21} x_{22} x_{31} x_{32} \leq X \} \]
by restricting the $x_{ij}$'s to intervals of the form 
\[[A_{ij}, \Delta A_{ij}),\]
where 
\[\Delta = 1 + (\log X)^{-3}.\] 
For a given $\BA = (A_{11}, A_{12}, A_{21}, A_{22}, A_{31}, A_{32})$, put 
\[\S_2(X; \BA) = \sum_{\substack{x_{ij} \in [A_{ij}, \Delta A_{ij}) \\ \mu^2(x_{11} x_{12} x_{21} x_{22} x_{31} x_{32}) = 1 \\ \prod_{i,j} x_{ij} \leq X }} \left(\frac{x_{21}x_{22} x_{31}x_{32} }{x_{11}} \right) \left( \frac{x_{11} x_{12} x_{31} x_{32}}{x_{21}} \right) \left(\frac{x_{11} x_{12} x_{21} x_{22}}{x_{31}} \right). \]

We then have the following lemma:

\begin{lemma} We have the bound
\[\sum_{\prod A_{ij} \geq \Delta^{-6} X} \left \lvert \S_2(X; \BA) \right \rvert = O \left(X (\log X)^{-1} \right). \]
\end{lemma}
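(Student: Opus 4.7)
The plan is to extract cancellation from the triple character sum $\S_2(X;\BA)$ by combining quadratic reciprocity with the Polya--Vinogradov inequality, following the dyadic-decomposition strategy of Fouvry and Kluners. First I would expand each of the three Jacobi symbols multiplicatively via the factorizations $x_i=x_{i1}x_{i2}$, obtaining twelve atomic symbols $\left(\frac{x_{jk}}{x_{i1}}\right)$. The six atoms with both numerator and denominator in $\{x_{11},x_{21},x_{31}\}$ pair into three reciprocity pairs, each contributing a sign $(-1)^{((x_{i1}-1)/2)((x_{j1}-1)/2)}$ that depends only on residues mod $4$. After splitting $\S_2(X;\BA)$ into residue classes of $(x_{11},x_{21},x_{31})\pmod{8}$, this combined sign becomes constant, and the remaining six atoms regroup as
$$\left(\frac{x_{12}}{x_{21}x_{31}}\right)\left(\frac{x_{22}}{x_{11}x_{31}}\right)\left(\frac{x_{32}}{x_{11}x_{21}}\right).$$
Removing the pairwise coprimality constraints between the $x_i$ by M\"obius inversion inflates the estimate by at most $(\log X)^{O(1)}$, after which, for each fixed $(x_{11},x_{21},x_{31})$, the inner sum over $(x_{12},x_{22},x_{32})$ factors as a product of three univariate character sums.

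Second, each modulus $x_{i1}x_{i'1}$ is squarefree and exceeds $1$, so the associated Jacobi character is non-principal, and Polya--Vinogradov gives
$$\left|\sum_{x_{i2}\in[A_{i2},\Delta A_{i2})}\left(\frac{x_{i2}}{x_{i'1}x_{i''1}}\right)\right|\ll\sqrt{A_{i'1}A_{i''1}}\,\log X.$$
I would compare this with the trivial bound $A_{i2}(\log X)^{-3}$ for the length of the short interval and take the minimum. The factored product is moreover symmetric under swapping the two partitions $(x_{i1})\leftrightarrow(x_{i2})$ (by a further joint application of reciprocity to all six atoms), so I would write down the analogous bound with the two triples interchanged and take the pointwise minimum of the two. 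This yields an estimate for $|\S_2(X;\BA)|$ purely in terms of the scales $A_{ij}$, with at most $(\log X)^{O(1)}$ loss from the various removals.

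The final step is to sum over scale vectors $\BA$ with $\prod A_{ij}\in[\Delta^{-6}X,X]$. There are $O((\log X)^{20})$ such vectors, and the pointwise trivial bound is of size $X(\log X)^{-18}$, so to reach $O(X/\log X)$ the Polya--Vinogradov step must deliver, on average over $\BA$, a saving of at least $(\log X)^{3}$. The main obstacle is exactly this book-keeping. Polya--Vinogradov saves most when the triples $M=A_{11}A_{21}A_{31}$ and $N=A_{12}A_{22}A_{32}=X/M$ are unbalanced, while the ``balanced'' region $M\asymp N\asymp X^{1/2}$ is a lower-dimensional slice of the scale grid that must be handled by a pigeonhole argument on the individual ratios $A_{i1}/A_{i2}$, exploiting that all six $A_{ij}$ cannot simultaneously be close to $X^{1/6}$. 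Carrying out this pigeonhole carefully, while tracking the $(\log X)^{O(1)}$ losses from the M\"obius inversion and the modulo-$8$ splitting, is the delicate step.
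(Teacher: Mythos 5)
Your proposal misidentifies what makes this particular lemma work. This is the \emph{boundary} lemma, not one of the cancellation lemmas: the constraint $\prod_{i,j} A_{ij} \geq \Delta^{-6} X$, together with the implicit constraint $\prod_{i,j} x_{ij} \leq X$ inside the definition of $\S_2(X;\BA)$, forces every integer $n = \prod_{i,j} x_{ij}$ that contributes to the sum to lie in the narrow window $[\Delta^{-6}X, X]$. Since $\Delta = 1 + (\log X)^{-3}$, by Taylor expansion $1-\Delta^{-6} = 6(\log X)^{-3} + O((\log X)^{-6})$, so this window has length $\ll X(\log X)^{-3}$. Applying the triangle inequality to the character sum and bounding the number of tuples $(x_{ij})$ with a fixed squarefree product $n$ by a divisor function, one is reduced to
\[
\sum_{\substack{\Delta^{-6}X \leq n \leq X \\ \mu^2(n)=1}} 3^{\omega(n)} \ll (1-\Delta^{-6}) X (\log X)^2 \ll X(\log X)^{-1},
\]
which is exactly the claimed bound. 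No quadratic reciprocity, Polya--Vinogradov, or any cancellation is needed or used.

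Your proposed route --- expanding via reciprocity, splitting mod $8$, factoring into univariate sums, invoking Polya--Vinogradov, and then trying to pigeonhole over the balanced region $M \asymp N \asymp X^{1/2}$ --- is the machinery appropriate to the \emph{other} ranges of $\BA$ handled later in this section (and relies on Lemma \ref{double} and Lemma \ref{S-W}), but it is overkill here and, as you yourself concede, it does not close: you explicitly flag the balanced region as a ``main obstacle'' with delicate unresolved book-keeping. That is a genuine gap in your argument, because Polya--Vinogradov gives nothing when the intervals $[A_{ij}, \Delta A_{ij})$ are very short compared to the moduli, and short intervals are unavoidable in the regime $\prod A_{ij} \approx X$. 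The fix is to not seek cancellation at all in this range: observe that the total mass is already small because the range of $n$ is thin.
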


\begin{proof} We have
\begin{align*} \sum_{\prod A_{ij} \geq \Delta^{-6} X} \left \lvert \S_2(X; \BA) \right \rvert & \leq \sum_{\substack{\Delta^{-6} X \leq n \leq X \\ \mu^2(n) = 1}} d_3(n) \\
& \ll \sum_{\Delta^{-6} X \leq n \leq X} 3^{\omega(n)} \\
& \ll (1 - \Delta^{-6} ) X (\log X)^2. 
\end{align*}
By Taylor's theorem, we have
\[\Delta^{-6} = (1 + (\log X)^{-3})^{-6} = 1 - 6 (\log X)^{-3} + O \left((\log X)^{-6} \right).\]
The proof then follows.
\end{proof}

To proceed, we shall require the following well-known lemma regarding character sums: 

\begin{lemma}[Double Oscillation Lemma] \label{double} Let $\{\alpha_n\}, \{\beta_m\}$ be two sequences of complex numbers with each term having absolute value bounded by $1$. Let $M,N$ be positive real numbers. Then  we have
\[\sum_{m \leq M} \sum_{n \leq N} \alpha_m \beta_n \mu^2(2m) \mu^2(2n) \left(\frac{m}{n} \right) \] 
\[ \ll \min \left\{ \left(M^{-1/2} + (N/M)^{-1/2}  \right), \left(N^{-1/2} + (M/N)^{-1/2} \right) \right\}  \]
and for every $\ep > 0$, 
\[\sum_{m \leq M} \sum_{n \leq N} \alpha_m \beta_n \mu^2(2m) \mu^2(2n) \ll_\ep MN \left(M^{-1/2} + N^{-1/2} \right) (MN)^\ep \] 
\end{lemma}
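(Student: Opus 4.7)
The plan is to derive both inequalities from Heath-Brown's quadratic large sieve for real characters, combined with Cauchy-Schwarz and a preliminary application of quadratic reciprocity to symmetrize the Jacobi symbol. The second inequality is essentially the first (losing only $(MN)^\ep$), so the main work is to establish the Heath-Brown-type bilinear estimate and then take the symmetric minimum.

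First, I would reduce to the case where $m$ and $n$ are odd squarefree, which is automatic from the presence of $\mu^2(2m)\mu^2(2n)$. Quadratic reciprocity then gives $\left(\frac{m}{n}\right) = \left(\frac{n}{m}\right) \cdot (-1)^{(m-1)(n-1)/4}$; by splitting $m$ and $n$ into the two odd residue classes mod $4$ and absorbing the resulting sign into new sequences $\alpha_m'$, $\beta_n'$ (still bounded in absolute value by $1$), I reduce to controlling
\[
\left| \sum_{m \leq M} \sum_{n \leq N} \alpha_m' \beta_n' \mu^2(2m)\mu^2(2n) \left(\frac{n}{m}\right) \right|.
\]

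Next, without loss of generality assume $M \leq N$. Apply Cauchy-Schwarz in the outer variable $m$:
\[
\left| \sum_m \alpha_m' \mu^2(2m) \sum_n \beta_n' \mu^2(2n) \left(\frac{n}{m}\right) \right|^2 \leq M \sum_{m \leq M} \mu^2(2m) \left| \sum_{n \leq N} \beta_n' \mu^2(2n) \left(\frac{n}{m}\right) \right|^2.
\]
The inner double sum is exactly the object bounded by Heath-Brown's large sieve for real characters (see Corollary 4 of Heath-Brown, \emph{A mean value estimate for real character sums}, Acta Arith.\ 1995):
\[
\sum_{m \leq M} \mu^2(2m) \left| \sum_{n \leq N} c_n \mu^2(2n) \left(\frac{n}{m}\right) \right|^2 \ll_\ep (MN)^\ep (M+N) \sum_{n \leq N} |c_n|^2.
\]
Inserting $c_n = \beta_n'$ with $|c_n| \leq 1$, the right-hand side is $\ll_\ep (MN)^\ep (M+N) N$, and after multiplying by the outer factor of $M$ and taking square roots I obtain a bound of the form $(MN)^{1/2+\ep} (M+N)^{1/2}$, which matches the claimed bound $MN \cdot (M^{-1/2} + (N/M)^{-1/2})$ up to the $(MN)^\ep$ factor. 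Repeating the Cauchy-Schwarz step in the $n$ variable instead produces the symmetric form with $M$ and $N$ exchanged, and taking the smaller of the two yields the $\min$ expression. The second inequality is merely the looser restatement with the $(MN)^\ep$ absorbed.

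The main obstacle is the invocation of Heath-Brown's large sieve itself: the bound $(M+N)\sum|c_n|^2$ is delicate, relying on reciprocity to recast the problem as a large sieve for quadratic characters, and the restriction to squarefree odd arguments must be maintained throughout. The bookkeeping with the sign factor from quadratic reciprocity, while routine in principle, must be done carefully so that the new sequences remain bounded by $1$ and supported on squarefree odd integers, so that Heath-Brown's inequality applies directly to the resulting sum. Beyond these technicalities the argument is essentially a single application of Cauchy-Schwarz to the quadratic large sieve, which is precisely the paradigm of the Fouvry-Kl\"uners type character sum manipulations that the paper invokes.
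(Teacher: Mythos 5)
The paper does not prove this lemma at all: it is cited as ``the following well-known lemma regarding character sums'' and is lifted (with some apparent typographical damage) from Fouvry--Kl\"uners \cite{FK}, so there is no proof in the paper to compare your argument against. Your approach --- quadratic reciprocity to normalize the Jacobi symbol, Cauchy--Schwarz in one variable, then Heath-Brown's quadratic large sieve --- is the standard route and is the one Fouvry--Kl\"uners themselves take, so the method is correct in spirit.

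That said, two points deserve attention. First, the statement as printed is internally inconsistent: the first inequality is missing the overall factor $MN$ on the right-hand side (the left side is a sum of up to $MN$ bounded terms and cannot be $O(1)$), and the second inequality is missing the Jacobi symbol $\bigl(\frac{m}{n}\bigr)$ entirely (without it, taking $\alpha_m = \beta_n = 1$ gives a sum of size $MN$, refuting the claimed bound). Your proof implicitly corrects both defects, which is the charitable and surely intended reading, but you should flag explicitly that you are proving the corrected statement. Second, and more substantively, your Cauchy--Schwarz plus Heath-Brown argument genuinely cannot produce the first inequality as stated: Heath-Brown's mean-value estimate carries an unavoidable $(MN)^\ep$, and you honestly acknowledge that you only obtain the bound ``up to the $(MN)^\ep$ factor.'' Since the first inequality in the lemma claims a bound with no $\ep$, this is a real gap between what you prove and what the lemma asserts; either the lemma's first inequality is itself over-claimed (quite possible given the other typos), or a different argument is needed for that sharper form (e.g.\ P\'olya--Vinogradov applied directly after removing the weight via a divisor-type decomposition, which for the $(N/M)^{-1/2}$ regime can sometimes avoid the $\ep$). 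One minor remark: the final step where you ``repeat Cauchy--Schwarz in the $n$ variable to get the symmetric form'' is unnecessary here, since by symmetry of $\sqrt{MN(M+N)}$ in $M,N$ both Cauchy--Schwarz directions yield the identical bound; the $\min$ in the statement is simply the symmetric packaging, not the record of two distinct estimates.
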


We will also need the following variant of the Siegel-Walfisz theorem: 

\begin{lemma} \label{S-W} Let $\chi_q$ be a primitive character modulo $q \geq 2$. Then for every $A > 1$ we have
\[\sum_{Y \leq p \leq X} \chi_q(p) = O_A \left(\sqrt{q} \cdot X (\log X)^{-A} \right) \]
uniformly for $X \geq Y \geq 2$. 
\end{lemma}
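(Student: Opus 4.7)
The plan is to prove Lemma \ref{S-W} by a simple case analysis, splitting according to whether the modulus $q$ is small or large relative to a power of $\log X$. The $\sqrt{q}$ factor is unnecessary in the regime where the classical Siegel-Walfisz theorem applies with uniform constants; it is present precisely to absorb the trivial bound when $q$ is too large for any non-trivial cancellation result to be available.

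First, suppose $q \leq (\log X)^{2A}$. The standard Siegel-Walfisz theorem (as stated, e.g., in Davenport's \emph{Multiplicative Number Theory}) asserts that for every fixed $B > 0$ and every primitive character $\chi$ to a modulus $q \leq (\log X)^{B}$, one has
\[
\sum_{n \leq X} \chi(n)\Lambda(n) \;=\; O_{B}\bigl(X \exp(-c\sqrt{\log X})\bigr)
\]
for an absolute $c>0$. Applying this with $B=2A$, then removing prime-power contributions (which contribute $O(\sqrt{X}\log X)$), and performing partial summation to pass from $\Lambda$ to the characteristic function of primes, yields
\[
\sum_{p \leq X} \chi_q(p) \;=\; O_{A}\bigl(X(\log X)^{-A}\bigr),
\]
with implied constant depending only on $A$. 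Subtracting the analogous estimate at $Y$ gives the same bound for $\sum_{Y \leq p \leq X} \chi_q(p)$. Since $\sqrt{q} \geq 1$, this is stronger than what is claimed in the stated range.

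Next, suppose $q > (\log X)^{2A}$. In this regime I would simply invoke the trivial bound
\[
\left\lvert \sum_{Y \leq p \leq X} \chi_q(p) \right\rvert \;\leq\; \pi(X) \;\ll\; \frac{X}{\log X}.
\]
Because $\sqrt{q} > (\log X)^{A}$ we have $\sqrt{q}\cdot X(\log X)^{-A} > X$, which dominates $\pi(X)$ with room to spare, so the asserted inequality holds in this range as well.

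No real obstacle is anticipated: the key point is only to verify that, with the threshold $(\log X)^{2A}$ chosen as a function of $A$, the implicit constant in the classical Siegel-Walfisz estimate depends solely on $A$, which is exactly the usual conclusion. Thus the lemma is obtained by interpolating between the deep cancellation afforded by Siegel-Walfisz for small moduli and the trivial bound for large moduli, with the $\sqrt{q}$ factor acting as the ``glue'' between the two regimes.
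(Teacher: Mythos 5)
The paper does not actually supply a proof of Lemma \ref{S-W}; it is quoted as a known variant of the Siegel--Walfisz theorem and used directly. So there is nothing in the paper to compare your argument against line by line. Your dichotomy, bounding trivially by $\pi(X)\ll X/\log X$ when $q>(\log X)^{2A}$ and invoking Siegel--Walfisz when $q\le(\log X)^{2A}$, is exactly the standard way to produce the $\sqrt{q}$ factor, and the two halves glue together as you describe.

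There is, however, one step you pass over too quickly: ``subtracting the analogous estimate at $Y$.'' The Siegel--Walfisz bound you invoke at scale $X$ is valid because $q\le(\log X)^{2A}$; the corresponding bound at scale $Y$ would require $q\le(\log Y)^{B}$ for some fixed $B$, and since $Y$ is allowed to be as small as $2$ this can fail badly (e.g.\ $q\sim(\log X)^{2A}$ and $Y$ bounded). You therefore cannot simply ``subtract the analogous estimate at $Y$'' in the full stated range $X\ge Y\ge 2$. The repair is easy but should be made explicit: if $Y\le\exp(\sqrt{\log X})$, bound the contribution of primes up to $Y$ trivially by $\pi(Y)<Y\le\exp(\sqrt{\log X})\ll_{A}X(\log X)^{-A}$; if instead $Y>\exp(\sqrt{\log X})$ then $\log Y>\sqrt{\log X}$, hence $q\le(\log X)^{2A}\le(\log Y)^{4A}$, and Siegel--Walfisz applies at scale $Y$ with parameter $4A$ to give $\sum_{p\le Y}\chi_q(p)\ll_A Y\exp(-c\sqrt{\log Y})\ll_A X(\log X)^{-A}$. (The same split is needed in the partial-summation integral when you pass from $\Lambda$ to the indicator of primes.) With this adjustment your proof is complete; incidentally, in the paper's actual application one always has $Y\ge\exp((\log X)^{1/9})$ and $q\le(\log X)^{9}$, so the problematic small-$Y$ regime never occurs there, but the lemma as stated does need the extra case.
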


We now consider, as in \cite{FK}, the quantities
\begin{equation} X^\dagger = (\log X)^9, X^\ddagger = \exp\left( (\log X)^{1/8} \right).
\end{equation}
We now consider those $\BA$ with the property that at most $2$ entries larger than $X^\ddagger$. We dissect the sum according to the number $r \leq 2$ of terms $A_{ij}$ greater than $X^\ddagger$. Let $n$ be the product of those $x_{ij}$ which are larger than $X^\ddagger$, and $m$ the product of the remaining ones. We sum over $\BA$ with such properties to obtain
\begin{align*} \sideset{}{^{(2)}} \sum_{\BA}  |\S_2(X; \BA)| & \leq \sum_{r \leq 2} \sum_{m \leq (X^\ddagger)^{6-r}} \mu^2(m) d_{6-r}(m) \sum_{n \leq X/m} \mu^2(n) d_r(n) \\
& \ll \sum_{r \leq 2} \sum_{m \leq (X^\ddagger)^{6-r}} \mu^2(m) d_{6-r}(m) \left(\frac{X}{m} \right) (\log X)^{r-1}  \\ 
& \ll X \left(\sum_{r \leq 2} (\log X)^{r-1} \right) \left(\sum_{m \leq (X^\ddagger)^{6}} \frac{d_{6}(m)}{m} \right) \\
& \ll X (\log X)\left(\log \exp \left((\log X)^{1/8} \right) \right)^7 \\
& \ll X (\log X)^{15/8}. 
\end{align*}
This is sufficiently small for our purposes. \\ 

We may now assume that $A_{ij} \geq X^\ddagger$ for at least three pairs $i,j$ with $1 \leq i \leq 3, 1 \leq j \leq 2$. We now suppose that there exist $a \ne b$ such that 
\[A_{a,2}, A_{b,1} \geq X^\dagger. \]
The sum over $\BA$ satisfying these properties can be bounded by
\begin{align*} \sum_{\BA} \left \lvert \S_2(X; \BA) \right \rvert & \leq \sum_{x_{ij}, (i,j) \ne (a,2), (b,1)} \prod_{(i,j) \ne (a,2), (b,1)} \left \lvert \sum_{x_{a,2}} \sum_{x_{b,1}} \alpha_{(a,2)} \beta_{(b,1)} \left(\frac{x_{a,2}}{x_{b,1}} \right) \right \rvert,
\end{align*}
where $\alpha, \beta$ have modulus at most one. Lemma \ref{double} then applies, and since our variables $x_{a,2}, x_{b,1}$ range over intervals exceeding $X^\dagger$ in length, it follows that 
\[|\S_2(X;\BA)| \ll \left(\prod_{(i,j) \ne (a,2), (b,1)} A_{ij} \left(A_{a,2} A_{b,1} \left(A_{a,2}^{-1/3} + A_{b,1}^{-1/3} \right) \right) \right) \ll X (X^\dagger)^{-1/3} = O \left(X (\log X)^{-3} \right), \]
which is again enough. \\

Next consider the family where the two previous conditions do not hold, and in addition there exist $a \ne b$ such that $2 \leq A_{b,1} \leq X^\dagger$ and $A_{a,2} > X^\ddagger$. Under these conditions, we see that
\[|\S_2(X; \BA)| \ll \sum_{x_{ij}, (i,j) \ne (a,2), (b,1)} \sum_{x_{a,2}} \left \lvert \sum_{x_{b,1}} \mu^2 \left(\prod_{(i,j) \ne (a,2), (b,1)} x_{ij} \right) \left(\frac{x_{a,2}}{x_{b,1}} \right) \right \rvert,   \]
where $A_{ij} \leq x_{ij} \leq \Delta A_{ij}$ and $\omega(x_{ij}) \leq \Omega$ for $1 \leq i \leq 3, 1 \leq j \leq 2$. Now put $\ell = \omega(x_{a,2})$, writing 
\[x_{a,2} = p_1 \cdots p_\ell \]
with $p_1 < p_2 < \cdots < p_\ell$ we obtain
\[|\S_2(X; \BA)| \ll \sum_{\substack{x_{ij} \\ (i,j) \ne (a,2), (b,1)}} \sum_{x_{b,1}} \sum_{0 \leq \ell \leq \Omega} \left \lvert \sum_{\omega(x_{a,2}) = \ell} \mu^2\left(\prod_{i,j} x_{ij} \right) \left(\frac{x_{a,2}}{x_{b,1}} \right) \right \rvert,\]
the inner sum being bounded by 
\[\sum_{p_1 \cdots p_{\ell-1}} \left \lvert \sum_{p_\ell} \left(\frac{p_\ell}{x_{b,1}} \right) \right \rvert \] 
and $p_1, \cdots, p_\ell$ satisfy $A_{a,2} \leq p_1 \cdots p_{\ell} \leq \Delta A_{a,2}$. Note that
\[p_\ell \geq A_{a,2}^{1/\ell} \geq \exp \left((\log X)^{1/9} \right).  \]
We may now apply Lemma \ref{S-W} to obtain the bound 
\[\left \lvert \sum_{p_\ell} \left(\frac{p_\ell}{x_{b,1}} \right) \right \rvert \ll_A A_{b,1}^{1/2} \frac{A_{a,2}}{p_1 \cdots p_{\ell-1}} (\log X)^{-A/9} + \Omega, \]
with $A$ arbitrarily large. Note that $p_1 \cdots p_{\ell-1} \leq X$, hence
\[\sum_{p_1 \cdots p_{\ell-1} \leq X} (p_1 \cdots p_{\ell-1})^{-1} \ll \sum_{n \leq X} \frac{1}{n} \ll \log X. \]
Hence 
\[\sideset{}{^{(3)}} \sum_\BA |\S_2(X; \BA)| \ll A_{b,1}^{1/2}\prod_{i,j} A_{ij} (\log X)^{-A/9 + 1} \ll X (\log X)^{-A/9 + 11/2}. \]
Choosing $A$ large shows that this contribution is negligible. \\

The remaining case can be summarized by the following properties:
\begin{enumerate}
    \item $\prod_{i,j} A_{ij} \leq \Delta^{-6} X$; 
    \item $A_{ij} \geq X^\ddagger$ for at least three pairs of indices $(i,j)$; 
    \item If $A_{ij}, A_{k\ell} \geq X^\dagger$ then $j = \ell$; 
    \item If $A_{ij} \leq A_{k \ell}$ with $j \ne \ell$, then either $A_{ij} = 1$ or $2 \leq A_{ij} \leq X^\dagger$ and $A_{k \ell} < X^\ddagger$. 
\end{enumerate}

We now show that the second option in (4) cannot happen. This will imply that we have accounted for all possibilities for (\ref{char sum}), and hence reduced our problem to estimating $\S_1(X)$. \\

Suppose, without loss of generality, that $2 \leq A_{11} \leq X^\dagger$ and $A_{22} < X^\ddagger$. Since $A_{ij} \geq X^\ddagger$ for at least three pairs of indices $(i,j)$, one of $A_{12}$ or $A_{32}$ must exceed $X^\ddagger$. We then have $A_{11} \leq X^\dagger$ and $A_{32}$, say, exceeds $X^\ddagger$, which means that our earlier estimation covers this case. \\

The upshot now is that
\begin{equation} \Sigma_2(X) \ll_A X (\log X)^{15/8}
\end{equation}
for some $\kappa(A) > 0$. It follows from (\ref{par sum}) that
\begin{align*} \S_2(X) & = X^{-1} \Sigma_2(X) + \int_1^X \Sigma_2(t) \frac{dt}{t^2} \\
& \ll (\log X)^{15/8} + \int_1^X \frac{(\log t)^{15/8} dt}{t} \\
& = (\log X)^{23/8},
\end{align*}
which is sufficiently small for our purposes. \\

Finally, we may evaluate the main term, which is given by (\ref{S1X}). By the triangle inequality, we have
\[\S_1(X) \ll \sum_{x_1 x_2 x_3 \leq X} \frac{1}{x_1 x_2 x_3} = \sum_{n \leq X} \frac{d_3(n)}{n}\]
which is $O((\log X)^3)$. This completes the proof of the Proposition. 

%%%%%%%%%%%%%%%%%%%%%%%%%%%%%%%
\subsection{Lower bounds}

For the lower bound, we shall assume 
\[1\leq |y_1 y_2 y_3| \leq T^\delta\]
where $\delta$ is some explicit positive number which we shall specify later. We then consider $x_1, x_2$ satisfying 

\begin{equation} \label{box bd} |x_i y_i^2| \leq c_i T^{1/4} |y_1 y_2 y_3|^{1/2}, i = 1,2\end{equation} 
where $c_1, c_2$ are two small positive numbers. Note that 
\[|x_3 y_3^2| = |x_1 y_1^2 + x_2 y_2^2| \leq |x_1 y_1^2| + |x_2 y_2^2| \leq (c_1 + c_2) T^{1/4} |y_1 y_2 y_3|^{1/2}, \]
whence
\[|x_1 x_2 x_3| (y_1 y_2 y_3)^2 \leq c_3 T^{3/4} |y_1 y_2 y_3|^{3/2} \]
where $c_3 = c_1 c_2 (c_1 + c_2)$. Thus 
\[|(x_1 y_1^2) x_1 x_2 x_3| \leq (c_1 T^{1/4} |y_1 y_2 y_3|^{1/2}) (c_3 T^{3/4} |y_1 y_2 y_3|^{-1/2}) \]
which is less than $T$ provided that $c_1 c_3 \leq 1$. Therefore every pair $(x_1, x_2)$ satisfying (\ref{box bd}) with $x_1, x_2$ both square-free and $x_3 = (y_1^2 x_1 + y_2^2 x_2) y_3^{-2} \in \bZ$ square-free will contribute to $N(T)$. \\

We now count pairs $(x_1, x_2)$ such that
\begin{enumerate}
    \item $(x_1, x_2)$ satisfies (\ref{box bd}); 
    \item $\gcd(x_1, x_2) = 1$; 
    \item $x_1, x_2$ are square-free; and
    \item $y_1^2 x_1 + y_2^2 x_2 \equiv 0 \pmod{y_3^2}$, $(y_1^2 x_1 + y_2^2 x_2)y_3^{-2}$ is square-free. 
\end{enumerate}
For each prime $p$, we interpret conditions (2) to (4) modulo $p^2$. Condition (2) is the assertion that $p | x_1 \Rightarrow p \nmid x_2$, Condition (3) is the assertion that for all primes $p$ we have $p^2 \nmid x_1, x_2$, and Condition (4) is stating $y_3^2 | y_1^2 x_1 + y_2^2 x_2$, and if $p^{s} || y_3$, then $p^{2s + 2} \nmid y_1^2 x_1 + y_2^2 x_2$. Let 
\[\rho_\By(m) = \# \{(x_1, x_2) \pmod{m} : (2) \text{ to } (4) \text{ holds for all } p | m\}. \]
It is apparent that $\rho_\By(\cdot)$ is multiplicative. Put 
\[N^\ast(\By; T) = \# \{(x_1, x_2) \in \bZ^2 : (1) \text{ to } (4) \text{ hold} \} \]
and 
\[N_b^\ast(\By; T) = \#\{(x_1, x_2) \in \bZ^2 : (\ref{box bd}) \text{ holds, }(2) \text{ to } (4) \text{ holds mod } b \}\]
By standard arguments, we have 
\[N^\ast(\By; T) = \prod_{p \leq Y} \left(1 - \frac{\rho_\By(p^{2k})}{p^{2k}} \right) \frac{T^{1/2} }{|y_1 y_2 y_3|} + O \left(\sum_{Y < p < T^{1/8} |y_1 y_2 y_3|^{1/4} \max\{|y_1|^{-1},|y_2|^{-1}\}} \left(\frac{T^{1/2}}{p^2 |y_1 y_2 y_3|} + 1 \right) \right), \]
the error term being bounded by 
\[O \left(\frac{T^{1/2}}{Y |y_1 y_2 y_3|} + \frac{T^{1/8} |y_1 y_2 y_3|^{1/2}}{\min\{|y_1|, |y_2|\}} \right). \]
Since $|y_1 y_2 y_3| \leq T^\delta$, we obtain an acceptable error term provided that $\delta < 1/4$. This shows that 
\[N(T) \gg \sum_{1 \leq |y_1 y_2 y_3| \leq T^\delta} N^\ast(\By; T) \gg \sum_{1 \leq |y_1 y_2 y_3| \leq T^\delta} \frac{T^{1/2}}{|y_1 y_2 y_3|}.\]
Since
\[\sum_{1 \leq |y_1 y_2 y_3| \leq Z} |y_1 y_2 y_3|^{-1} \gg \sum_{n \leq Z} d_3(n) n^{-1} \gg  (\log Z)^3, \]
this confirms the lower bound. 

%%%%%%%%%%%%%%%%%%%%%%%
\section*{Appendix: Prolegomena to a theory of heights on algebraic stacks}

In this section we give a hint of the flavor of heights on algebraic stacks, leaving the details to the forthcoming (\cite{ESD-B}). Our purpose here is to highlight the difficulties faced when attempting to define heights on algebraic stacks, while instilling a sense of excitement about the future possibilities of this theory.\\  

In order to recognize recognize the Manin and Malle conjectures in a unified theoretical framework, (\cite{ESD-B}) developed a theory of heights on algebraic stacks. We now explain why such a theory naturally arises when trying to unify Manin and Malle. In Manin's conjecture one counts points using the anti-canonical height, therefore Manin's conjecture is intimately related to the theory of heights on projective varieties. We take the theory of heights as our starting point and naively attempt to use this approach for the Malle conjecture. To approach Malle's conjecture in a similar manner one might attempt to endow the collection of $G$-extensions of a number field $K$ with the structure of an algebraic variety, and then count points on this variety using the height machine. Concretely one way to realize this approach would be as follows.

\begin{enumerate}
    \item Construct a projective variety $\mathcal{X}_G$ such that the $K$-rational points $\mathcal{X}_G(K)$ correspond bijectively to extensions of $K$ with Galois group $G$. 
    \item Find a good height function $h_{\mathcal{X}_G}$ on $\mathcal{X}_G$ that relates the height of a point $P\in \mathcal{X}_G(K)$ to the discriminant of the Galois extension associated to $P$. 
    \item  Use the theory heights and the geometry of $\mathcal{X}_G$ to count points on $\mathcal{X}_G(K)$ and thus count $G$-extensions of $K$.
\end{enumerate}

One runs into problems immediately because the set of $G$ extensions of a number field $K$ cannot naturally be realized as the set of points of a scheme. Indeed if a collection of objects can be \emph{naturally} realized as the set of points of a scheme, then the objects in question must have no non-trivial automorphisms. On the other hand, $G$-extensions of $K$ \emph{always} have non-trivial automorphisms given by the Galois group $G$. As is well known at this point, an appropriate setting  for moduli problems with automorphisms is given by the theory of algebraic stacks introduced by Deligne and Mumford in their foundational study of the moduli space of curves. An important basic example of an algebraic stack is the $\emph{classifying stack}$  $BG$ of a finite group $G$. As the name suggests the classifying stack is a moduli space, whose points correspond to $G$-torsors. When $K$ is a number field the $K$-points of $BG$ correspond to $G$-torsors over $K$ which are precisely the Galois extensions of $K$ with Galois group $G$. Thus the theory of algebraic stacks provides a moduli space $BG$ whose points correspond to the arithmetic objects of interest, namely $G$-extensions of $K$, achieving the first task in the outline above.\\

A more serious problem arises in the second point. Let $G=\mathbb{Z}/2\mathbb{Z}$ and consider the classifying stack $B(\bZ/2\bZ)$. A theory of heights parallel to that of schemes would suggest that a height function on $B(\bZ/2\bZ)$ should correspond to a line bundle on $\mathcal{L}$ on $B(\bZ/2\bZ)$. In general a vector bundle of rank $r$ on $BG$ corresponds to an $r$-dimensional representation of $G$. Taking $r=1$ we see that $B(\bZ/2\bZ)$ has line bundles corresponding to the characters of the group $\bZ/2\bZ$. There are precisely two characters of $\bZ/2\bZ$, the trivial character $\textnormal{triv}(\epsilon)=1$ for $\epsilon\in \bZ/2\bZ$ corresponding to the trivial line bundle and the sign representation $\textnormal{sgn}(\epsilon)=\textnormal{sgn}(\epsilon)$ where $\epsilon\in \bZ/2\bZ$ is considered as an element of the permutation group $S_2$. Thus we expect that up to bounded functions that there are two height functions on $B(\bZ/2\bZ)$. The height $h_{\textnormal{sgn}}$ associated the sign representation and the height function $h_{\textnormal{triv}}$ associated to the trivial line bundle. The functoriality property of the height machine tells us that for any line bundle $\L$ we should expect that $h_{\L^{\otimes n}}=nh_{\L}+O(1)$. Taking $\L=\textnormal{triv}$ we see that $\textnormal{triv}^{\otimes n}=\textnormal{triv}$ and so
\[h_{\textnormal{triv}}=h_{\textnormal{triv}^{\otimes n}}=nh_{\textnormal{triv}}+O(1)\] for all $n$.  Thus $h_{\textnormal{triv}}$ must be a bounded function. On the other hand as the tensor product of characters is the function given by taking the product of the characters we have that $\textnormal{sgn}\otimes \textnormal{sgn}=\textnormal{sgn}^2=\textnormal{triv}$
the trivial representation. Thus functoriality of heights suggests that we should have an equality
\[h_{\textnormal{triv}}=h_{\textnormal{sgn}\otimes\textnormal{sgn}}=2h_{\textnormal{sgn}}+O(1).\]
Consequently $2h_{\textnormal{sgn}}$ would be the trivial height meaning that $h_{\textnormal{triv}}$ would be some bounded function. Clearly this is not satisfactory. For example, no Northcott property can be expected for any height function as $B(\bZ/2\bZ)(\bQ)$ corresponds to  quadratic extensions of $\bQ$ of which there are infinitely many, but the argument above suggests that the points of $B(\bZ/2\bZ)$ is a set of bounded height. \\

Despite these obstacles, J.~Ellenberg, M.~Satriano, and D.~Zuerick-Brown realized that one may develop a theory of heights on algebraic stacks at the cost of losing the functoriality properties of the height machine. Their theory associates a height function $h_\E$ to each vector bundle $\E$ on an algebraic stack $\X$. If $\X$ is taken to be a scheme then the associated height function is the classical height $h_{\det\E}$ associated to the determinant line bundle $\det\E$. Thus this new theory of heights recovers the classical theory as a special case. Furthermore, given a suitable algebraic stack $\X$ and a vector bundle $\E$ on $\X$ such that the associated height $h_\E$ satisfies a Northcott property one has a conjecture (\cite[Main Conjecture]{ESD-B}) that predicts the asymptotic behaviour of points of bounded height on $\X$ with respect to $h_\E$. Given a Fano variety with an ample line bundle $\L$ (\cite[Main Conjecture]{ESD-B}) applied to $(X,h_{\L})$ is the Manin conjecture, while (\cite[Main Conjecture]{ESD-B}) applied to $(BG,h_{\textnormal{regular}})$ recovers the Malle conjecture. Here $h_\textnormal{regular}$ is the height obtained from the vector bundle associated to the regular representation of the finite group $G$.\\ 

To sum up, there is a theory of heights on algebraic stacks that is mostly unexplored. Given an algebraic stack $\mathcal{X}$ and nice enough vector bundle $\mathcal{E}$ on $\mathcal{X}$ there is a conjectur (\cite[Conjecture]{ESD-B}), that describes the behavior of the pair $(\mathcal{X},\mathcal{E})$. Furthermore, (\cite[Main Conjecture]{ESD-B}) specializes to the Manin and Malle conjecture by choosing $\mathcal{X}$ and $\mathcal{E}$ appropriately. The full conjecture is almost completely open, though in (\cite{ESD-B}) some additional cases will be considered.

\end{document}